
\documentclass[12pt]{article}

\usepackage{latexsym}
\usepackage{graphics}
\usepackage{xspace}
\usepackage{psfrag}
\usepackage{epsfig}
\usepackage{pst-all}
\usepackage{amssymb, amsmath, amsthm, verbatim, mathrsfs}

\setlength{\oddsidemargin}{-.20in}
\setlength{\evensidemargin}{-.20in} \setlength{\textwidth}{6.8in}
\setlength{\topmargin}{-0.6in} \setlength{\textheight}{9.1in}

\pagenumbering{arabic}
\newcommand {\bel}[1]{\begin{align*}}
\newcommand {\eel}[1]{\end{align*}}
\newcommand {\bea}{\begin{eqnarray}}
\newcommand {\eea}{\end{eqnarray}}

\newcommand{\pr}{\mathbb{P}}
\newcommand{\E}{\mathbb{E}}
\newcommand{\R}{\mathbb{R}}

\newcommand{\G}{\mathbb{G}}

\newcommand{\ER}{Erd{\"o}s-R\'{e}nyi }
\newcommand{\cNfloor}{\lfloor cN\rfloor}

\newcommand{\mb}[1]{\mbox{\boldmath $#1$}}

\newcommand{\ignore}[1]{\relax}

\newtheorem{theorem}{Theorem}

\newtheorem{lemma}{Lemma}
\newtheorem{conj}{Conjecture}
\newtheorem{prop}{Proposition}
\newtheorem{coro}{Corollary}

\newtheorem{Defi}{Definition}
\newtheorem{Assumption}{Assumption}

\definecolor{Red}{rgb}{1,0,0}
\definecolor{Blue}{rgb}{0,0,1}
\definecolor{Olive}{rgb}{0.41,0.55,0.13}
\definecolor{Green}{rgb}{0,1,0}
\definecolor{MGreen}{rgb}{0,0.8,0}
\definecolor{DGreen}{rgb}{0,0.55,0}
\definecolor{Yellow}{rgb}{1,1,0}
\definecolor{Cyan}{rgb}{0,1,1}
\definecolor{Magenta}{rgb}{1,0,1}
\definecolor{Orange}{rgb}{1,.5,0}
\definecolor{Violet}{rgb}{.5,0,.5}
\definecolor{Purple}{rgb}{.75,0,.25}
\definecolor{Brown}{rgb}{.75,.5,.25}
\definecolor{Grey}{rgb}{.5,.5,.5}
\definecolor{Pink}{rgb}{1,0,1}
\definecolor{DBrown}{rgb}{.5,.34,.16}
\definecolor{Black}{rgb}{0,0,0}




\author{
 {\sf David Gamarnik }
  \thanks{Operations Research Center and Sloan School of Management, MIT, Cambridge, MA,  02139, e-mail: {\tt
gamarnik@mit.edu}. Research supported by the NSF grants CMMI-1031332.}
}

\begin{document}

\title{Right-convergence of sparse random graphs}

\maketitle

\begin{abstract}
The paper is devoted to the problem of establishing right-convergence of sparse random graphs. This concerns the convergence
of the logarithm of number of homomorphisms from graphs or hyper-graphs $\G_N, N\ge 1$ to some target graph $W$.
The theory of dense graph convergence, including random dense graphs, is now well understood~\cite{BorgsChayesEtAlGraphLimitsI},\cite{BorgsChayesEtAlGraphLimitsII},\cite{LovaszSzegedy},\cite{ChatterjeeVaradhan},
but its counterpart for sparse random graphs presents some fundamental difficulties. Phrased in the statistical physics terminology,
the issue is the existence of the log-partition function limits, also known as free energy limits, appropriately normalized for the
Gibbs distribution associated with $W$. In this paper we prove that the sequence of sparse \ER graphs is right-converging
when the tensor product associated with the target graph $W$ satisfies certain convexity property. We treat the case of discrete and continuous target graphs $W$.
The latter case allows us to prove a special case of Talagrand's recent conjecture (more accurately stated as level III Research Problem
6.7.2 in his recent book~\cite{TalagrandBook}), concerning the existence of the limit
of the measure of a set obtained from $\R^N$ by intersecting it with linearly in $N$
many subsets, generated according to some common probability law.

Our proof is based on the interpolation technique, introduced first by Guerra and Toninelli~\cite{GuerraTon} and
developed further in~\cite{FranzLeone},
\cite{FranzLeoneToninelliRegular}, \cite{PanchenkoTalagrand}, \cite{MontanariLDPCInterpolation},
\cite{BayatiGamarnikTetali}, \cite{AbbeMontanari}, \cite{ContucciDommersGiardinaStarr}.
Specifically, \cite{BayatiGamarnikTetali} establishes the right-convergence property for \ER graphs
for some special cases of $W$. In this paper most of the results in \cite{BayatiGamarnikTetali} follow
as a special case of our main theorem.
\end{abstract}

\section{Introduction}
Given two graphs $\G$ and $W$, a graph homomorphism is a mapping from the nodes of $\G$ to the nodes of $W$, such that
every edge in $\G$ is mapped onto an edge in $W$. When nodes and edges of $W$ are weighted, the homomorphism inherits a certain
weight itself (see Section~\ref{section:Definitions} for details).
A sequence of $N$-node graphs $\G_N, N\ge 1$ is defined to be right-converging with respect to $W$ if the logarithm of the sum of homomorphisms weights,
normalized by $N$, has a limit. In the statistical physics terminology, the nodes of $W$ correspond to spin values, and
the sum of homomorphism weights is called the partition function.

The theory of graph convergence is now well developed
for the case of dense graphs (graphs with number of edges of the order $O(N^2)$),
see~\cite{BorgsChayesEtAlGraphLimitsI},\cite{BorgsChayesEtAlGraphLimitsII},\cite{LovaszSzegedy},\cite{ChatterjeeVaradhan},
where the normalization is appropriately $N^2$, not $N$. The theory of sparse graphs convergence, however,
presents some challenges~\cite{BollobasRiordanMetrics},\cite{BorgsChayesKahnLovasz}, as even establishing some of the basic properties
of convergence for sparse graphs remain conjectures at best. For example, it is an open problem
to show that the most basic sequence of sparse random graphs, namely the sequence of sparse \ER graphs
is right-converging with respect to every target graph $W$.

In this paper we prove a special case of this conjecture, under the assumption that a certain tensor product associated with the
target graph $W$ satisfies some convexity property. Our additional technical assumption is
the existence of spin values (labels of $W$) with positive interaction with every other spin value (namely the corresponding edge weights are positive).
This assumption is adopted in order to avoid potentially nullifying the partition function. See~\cite{AbbeMontanari} where a similar
issue is treated instead by conditioning for partition function to stay positive.
We formulate the problem of right-convergence both for the case of discrete and continuous spin values, the latter corresponding to the case
when the nodes of $W$ are indexed by real values and node and edge weights are random quantities described
by some random measurable functions. Our framework is rich enough to accommodate a recent conjecture by Talagrand regarding
the existence of the limit of a measure of the set obtained from $\R^N$ by intersecting it with subsets
of $\R^N$, chosen i.i.d. according to some common probability law, see Section~\ref{section:ConjecturesResults} for the precise statement. More accurately stated as
Research Problem 6.7.2 in his recent book~\cite{TalagrandBook}, this is one of the many fascinating so-called Level III
problems in the book.

Our method of proof is based on the interpolation technique introduced by Guerra and Toninelli~\cite{GuerraTon}
in the context of Sherrington-Kirkpatrick model in statistical physics and further developed for the case of sparse graphs
(called diluted spin glass models in the statistical physics literature) by Franz and Leone~\cite{FranzLeone},\cite{FranzLeoneToninelliRegular},
Panchenko and Talagrand~\cite{PanchenkoTalagrand}, Montanari~\cite{MontanariLDPCInterpolation}, Bayati et al~\cite{BayatiGamarnikTetali},
Abbe and Montanari~\cite{AbbeMontanari}.
The idea is to build a sequence of graphs interpolating between a random graph on $N$ nodes, on the one hand, and
a disjoint union of two random graphs with $N_1$ and $N_2$ nodes on the other hand, where $N_1+N_2=N$. The interpolation is constructed
in such a way that in every step of the interpolation the log-partition function increases or decreases
in expectation (depending on a model). Such a property means that the expected log-partition function is sub- or super-additive, thus
implying the existence of the limit vis-\'{a}-vi the Fekete's lemma. The convergence with high probability requires an additional
concentration type argument, which for our case turns out to be more involved than usual,  due to the continuity of spin values.
We show that the super-additivity property holds for the class of models of interest  when the assumptions of our main theorem hold.
We further verify the assumptions for a broad scope of models considered earlier in the literature.
In particular, most of the results obtained in~\cite{BayatiGamarnikTetali}, including those regarding Independents Set, Partial Coloring, Ising
 and random K-SAT
models,  follow from the main result of the present paper as a special case. As mentioned above, we do suspect that the right-convergence holds for sparse \ER
graphs for \emph{all} target graphs $W$ and state this explicitly as a conjecture, since, at the present time we do not have a single counterexample,
however contrived.

The remainder of the paper is structured as follows. The notions of graph homomorphisms and right-convergence are introduced
in the following section. Many examples are discussed also in the same section. Finally, the section introduces the definition
of graph homomorphism for continuous spin values. While continuous spin models are ubiquitous in the physics literature, they
are rarely discussed in the context of graph theory. Our model assumptions, conjectures and our main result are stated in Section~\ref{section:ConjecturesResults}.
In the same section we provide an in depth discussion of the convexity of tensor products property - the principal technical tool underlying our main result,
and discuss its relevance for examples introduced earlier in Section~\ref{section:Definitions}. In Section~\ref{section:Concentration} we establish some
basic properties of the log-partition functions and establish a  concentration result. The interpolation technique
is introduced in Section~\ref{section:Interpolation} and the proof of our main result is found in the same section.

We finish this section by introducing  some notations. $\mb{e}_d=(1,1,\ldots,1)^T$ denotes a $d$-dimensional vector of ones.
$\mb{1}(A)$ is an indicator function, taking value $1$ when the event $A$ takes place and zero otherwise.
Throughout the paper we will use standard order of magnitude notations $O(\cdot)$ and $o(\cdots)$ where the constants hidden $O$ should be clear from the context.
$\R~ (\R_+)$ denotes the set of real (non-negative real) values.

\section{Graph homomorphisms and right-convergence}\label{section:Definitions}
Consider a $K$-uniform directed hypergraph $\G$ on nodes $\{1,2,\ldots,N\}\triangleq V(\G)$. Let $E(\G)$ be the set of hyperedges
of $\G$, each hyperedge
$e=(u^e_1,\ldots,u^e_K)\in E(\G)$ being an ordered sets of $K$ nodes in $\G$.
We fix a positive integer $q$ and refer to integers $0,1,\ldots,q-1$ as colors or spin values,
interchangeably. For every node $u\in V(\G)$ of the hypergraph,
let $\mathcal{N}(u,\G)$ be the set of edges incident to $u$. Then $|\mathcal{N}(u,\G)|$ is the degree of $u$ in $\G$,
namely the total number of hyperedges containing $u$. Similarly, for hyperevery edge $e$, let $\mathcal{N}(e,\G)$ be the set of hyperedges incident to $e$,
including $e$. Namely, $\mathcal{N}(e,\G)$ is the set of hyperedges sharing at least one node with $e$.
For simplicity, from this point on we will use the terms
graphs and edges in place of hypergraphs and hyperedges.

Each node $u\in V(\G)$ of $\G$ is associated with a random map $h_u:\{0,1,\ldots,q-1\}\rightarrow \R_+$ called node $u$ potential.
The sequence $h_u, u\in V(\G)$ is assumed to be i.i.d. distributed according to some probability measure $\nu_h$.
Similarly, each hyperedge $e\in E(\G)$
is associated with a random map $J_e:\{0,1,\ldots,q-1\}^K\rightarrow \R_+$ called hyperedge $e$ potential.
The sequence $J_e, e\in E(\G)$ is i.i.d. as well with a common probability measure $\nu_J$. Further details concerning
probability measures $\nu_h$ and $\nu_J$ will be discussed later. For many special cases it will be assume
that these measures are singletons. Namely, the maps $h_u$ and $J_e$ are deterministic.

Given an arbitrary map $\sigma:V(\G)\rightarrow \{0,\ldots,q-1\}$,
we associate with it a (random) weight
\begin{align}\label{eq:weightSigma}
H(\sigma)\triangleq \prod_{u\in V(\G)}h_u(\sigma(u))\prod_{e\in E(\G)} J_{e}(\sigma(u^e_1),\ldots,\sigma(u^e_K)).
\end{align}
Any such map is called \emph{homomorphism} for reasons explained below.
The following random variable is defined to be the partition function of $\G$:
\begin{align}\label{eq:Z}
Z(\G)\triangleq \sum_\sigma H(\sigma),
\end{align}
where the sum is taken over all maps $\sigma:V(\G)\rightarrow \{0,\ldots,q-1\}$. We note that the value $Z(\G)=0$ is possible according to this definition.
In case $Z(\G)>0$,  this induces a random Gibbs probability measure on the set of maps $\sigma$, where the probability mass of $\sigma$ is
$H(\sigma)/Z(\G)$. The graph $\G$ with potentials $h_u,J_e$ and the corresponding Gibbs measure is also commonly called Markov Random Field
in the Electrical Engineering literature.

Already a very rich class of models is obtained when the potentials $h_u$ and $J_e$ are deterministic, denoted by $h$ and $J$ for simplicity.
Consider, for example a special case $h=1$. Further, suppose $K=2$
and $J$ is a symmetric zero-one valued function in its two arguments. Consider an undirected graph $W$ on nodes
$0,\ldots,q-1$, where $(i,j)$ is an edge in $W$ if and only if $J(i,j)=1$ (the symmetry of $J$ makes the definition consistent).
Then observe that $H(\sigma)=1$ if $\sigma$ defines a graph homomorphism from $\G$ to $W$ and $H(\sigma)=0$ otherwise.
(Recall that given two graphs $\G_1,\G_2$,  a map $\sigma:V(\G_1)\rightarrow V(\G_2)$ is called graph homomorphism
if for every $(i_1,i_2)\in E(\G_1)$ we have  $(\sigma(i_1),\sigma(i_2))\in E(\G_2)$).
Thus $Z(\G)$ is the number of homomorphisms from $\G$ to $W$. In that sense we can think of  an arbitrary map $\sigma:V(\G)\rightarrow \{0,1,\ldots,q-1\}$
as a graph homomorphism and the associated partition function $Z(\G)$ is the "number" of homomorphisms.

Let us discuss some well-known examples from combinatorics  and statistical physics in the context of our definition.

\subsection{Examples}\label{subsection:Models}
\textbf{Independent Sets (Hard-Core) model.}
A parameter $\lambda>0$ is fixed. $K=2,q=2$. The node and edge potentials are deterministic denoted by $h$ and $J$ respectively. In particular,
we set $h(1)=\lambda$ and $h(0)=1$.
The edge potential is defined by
 $J(i_1,i_2)=0$
if $i_1=i_2=1$ and $J(i_1,i_2)=1, ~0\le i_1,i_2\le 1$, otherwise.  Then $H(\sigma)>0$ if  the set of nodes $u$ with $\sigma(u)=1$
is an independent set in $\G$, and $H(\sigma)=0$ otherwise.  Moreover, in the former case $H(\sigma)=\lambda^{k}$, where $k$ is the cardinality of the
corresponding independent set. Thus $Z(\G)$ is the usual partition function associated with the independent set model, also known as the hard-core gas model.
The parameter $\lambda$ is commonly called
\emph{activity} or \emph{fugacity}. When $\lambda=1$, $Z(\G)$ is simply the number of independent sets in the graph $\G$.

\vspace{.2in}
\noindent
\textbf{Partial Colorings (Potts) model.} $K=2$, $h_u=h\equiv 1$. A parameter $\beta\ge 0$ is fixed. The edge potentials are identical
 for all edges and given by $J(i_1,i_2)=1$ if $i_1\ne i_2$ and $=\exp(-\beta)$ otherwise.
In this case $H(\sigma)=\exp(-\beta k)$, where $k$ is the number of monochromatic edges, namely edges receiving the same color at the incident vertices.
The case $\beta>0 ~(\beta<0)$ is usually called anti-ferromagnetic (ferromagnetic) Potts model. In this paper we focus on the anti-ferromagnetic case.

\vspace{.2in}
\noindent
\textbf{Ising model.}
$K=2,q=2$. $h_u(1)=h(1)=h$, for some constant value $h\in\R$, and $h_u(0)=h(0)=1$. The edge potentials are identical for all edges and given by
$J(i_1,i_2)=\exp(-\beta \mb{1}(i_1\ne i_2)+\beta \mb{1}(i_1=i_2))$, for some parameter $\beta$.
As for the coloring model, the case $\beta>0$ ($\beta<0$) is called ferromagnetic (anti-ferromagnetic) Ising model. Parameter $h$
is called the external magnetization. It is more common to consider $\{-1,1\}$ as opposed to $\{0,1\}$ as a spin value space,
in which case we can simply write $J(i_1,i_2)=\exp(\beta i_1 i_2)$.
When $h=1$, it is easy to see that the Ising model is equivalent to a special case of the Potts model when $q=2$, by multiplying each $H(\sigma)$
by a constant factor.

\vspace{.2in}
\noindent
\textbf{Viana-Bray model.}
$q=2$. $h(1)=h>0$, for some constant value $h$, and $h(0)=1$. A random variable $I$, which is symmetric around zero and has bounded support,
and parameter $\beta>0$ are fixed. Let
$J(i_1,\ldots,i_K)=\exp\left(\beta I\prod_{1\le l\le K}(\mb{1}(i_l=1)-\mb{1}(i_l=0))\right)$. A more common approach is to fix the set of colors
to be $\{-1,1\}$, in which case $J(i_1,\ldots,i_K)=\exp\left(\beta I\prod_{1\le l\le K}i_l\right)$. The assumption of bounded support is not standard, but is adopted
in this paper for convenience.

\vspace{.2in}
\noindent
\textbf{XOR Model.} Consider the Viana-Bray model with $h=1$ and $I$ taking values $1$ and $-1$ with equal probability.
For convenience let us assume that the spin values are $-1$ and $1$ as opposed to $0$ and $1$.
When $I=1$, the potential $J(i_1,\ldots,i_K)$ takes value $\exp(\beta)$ if an only if an even number of $i_k, 1\le k\le K$ is $-1$,
and otherwise it takes value $\exp(-\beta)$. The situation is reversed when $I=-1$. A more common definition of the XOR model
is as follows: $J(i_1,\ldots,i_K)=(I+i_1\cdots i_K )\mod(2)$. Namely $J=0$ when the parity of the
product $i_1\cdots i_K$ coincides with the parity of $I$, and $J=1$ otherwise.
Thus the model involves hard-core interaction (namely allows $J$ to be zero).
In this case the
corresponding partition function $Z(\G)$ is the number of valid assignments, namely assignments such that every edge potential
value is equal to unity. This version
of the model is outside of the scope of this paper, but we do notice that we obtain it from our version
by defining $J(i_1,\ldots,i_K)=\exp\left(-\beta+\beta I\prod_{1\le l\le K}i_l\right)$ instead, and sending $\beta$ to $+\infty$.
The conditions for existence of valid assignments and the number of valid assignments has been a subject of study
on its own~\cite{MezardMontanariBook},\cite{AbbeMontanari},\cite{IbrahimiKanoriaKraningMontanari}.

\vspace{.2in}
\noindent
\textbf{Random K-SAT model.}
This is our first example for which
considering order of nodes in the edges $e=(u^e_1,\ldots,u^e_K)$ is relevant. We set  $q=2$.
$h_u=h\equiv 1$ for all $u$. For every edge $e$ a vector $(i_1^*,\ldots,i_K^*)$ is selected uniformly at random from $\{0,1\}^K$. We define
\begin{align*}
J_e(i_1,\ldots,i_K)=\left\{
                      \begin{array}{ll}
                        1, & \hbox{$(i_1,\ldots,i_K)\ne (i_1^*,\ldots,i_K^*)$ ;} \\
                        \exp(-\beta), & \hbox{$(i_1,\ldots,i_K)=(i_1^*,\ldots,i_K^*)$.}
                      \end{array}
                    \right.
\end{align*}
Again a more common version of this model is to defined $J_e(i_1,\ldots,i_K)$ to be $0$ when $(i_1,\ldots,i_K)=(i_1^*,\ldots,i_K^*)$
and $1$ otherwise. $Z(\G)$ is the number of satisfiable assignments.
As for the Viana-Bray model we can think of this model as a limit as $\beta\rightarrow\infty$.

\subsection{Continuous spin values}
We now generalize the notion of graph homomorphisms to the case of real valued colors (spins).
This is achieved by  making
$h$ and $J$ random measurable functions with real valued inputs.
Specifically,
assume that we have random i.i.d. (Lebesgue) measurable
function $h_u:\R\rightarrow \R_+, ~u\in V(\G)$, and
random i.i.d. (Lebesgue) measurable functions $J_e:\R^K\rightarrow \R_+, ~e\in E(\G)$. The probability measures
corresponding to the randomness in choices of $h_u$ and $J_e$
are again denoted by $\nu_h$ and $\nu_J$, respectively.
Every homomorphism, which now is assumed to be any map
$\sigma:V(\G)\rightarrow \R$, is associated with a weight defined as
\begin{align*}
H(\sigma)\triangleq \prod_{u\in V(\G)}h_u(\sigma(u))\prod_{e\in E(\G)} J_{e}(\sigma(u^e_1),\ldots,\sigma(u^e_K)),
\end{align*}
and the associated partition function is defined as the following integral taken in the Lebesgue sense:
\begin{align*}
Z(\G)\triangleq \int H(\sigma)d\sigma.
\end{align*}
A more conventional way to write the partition function is to think of  $(\sigma(u), u\in V(\G))$ as an $N=|V(\G)|$-dimensional real vector
$(x_u, u\in V(\G))=(x_1,\ldots,x_N)$, thus defining the associated partition function
\begin{align*}
Z(\G)=\int_{x=(x_1,\ldots,x_N)\in \R^N} \prod_{1\le u\le N}h_u(x_u)\prod_{e\in E(\G)} J_{e}(x_{u^e_1},\ldots,x_{u^e_K})dx.
\end{align*}
We adopt this notational convention from this point on.
It is  simple to see that the discrete case is a special case of the continuous spin value model. Indeed given
a discrete model corresponding to some $K,q$ and realizations of $h_u,J_e$,
define for every $x\in \R$
\begin{align}\label{eq:DiscreteContinuous}
h_u(x)=\left\{
         \begin{array}{ll}
           h_u(i), & \hbox{if $x\in [i,i+1)$, for  $i=0,1,\ldots,q-1$;} \\
           0, & \hbox{otherwise.}
         \end{array}
       \right.
\end{align}
Similarly, $J_e(x_1,\ldots,x_K)=J_e(i_1,\ldots,i_K)$ if $x_{i_l}\in [i_l,i_l+1), l=1,\ldots,K,$ and $J_e=0$ otherwise.
Then $\int H(x)dx=\sum_{\sigma}H(\sigma)$, with the appropriate meaning of $x$ and $\sigma$ in two expressions.

\subsection{Right-convergence  of  graph sequences}
We will be interested primarily  in the case of sequences of sparse graphs $\G$, which for the purposes of this paper we define
as a sequence of graphs $(\G_N, N\ge 1)$ such that $\sup_N |E(\G_N)|/|V(\G_N)|<\infty$. Namely, the number of edges grows  at most linearly
in the number of nodes. For simplicity we assume from now on that $|V(\G_N)|=N$ and $V(\G_N)=\{1,\ldots,N\}$.
The right-convergence of graph sequences concerns  the existence of the limit of the normalized log-partition function
$N^{-1}\log Z(\G_N)$. Here $\log Z(\G_N)$ is defined to be $-\infty$ in the case $Z(\G_N)=0$.
The following definition applies to both the discrete and continuous spin value cases.
\begin{Defi}
Given probability measures $\nu_h,\nu_J$, a sequence of graphs $(\G_N)$ is defined to be right-converging with respect to $\nu_h$ and $\nu_J$
if the sequence $\lim_{N\rightarrow\infty}N^{-1}\log Z(\G_N)$ converges in distribution to some random variable $Z$ in probability.
The sequence of graphs is defined to be right-converging if it is right-converging
with respect to every $\nu_h,\nu_J$.
\end{Defi}
In many of the interesting cases, included the case considered in this paper, the random variable $N^{-1}\log Z(\G_N)$ will be concentrated
around its mean $\E[N^{-1}\log Z(\G_N)]$, in which case the right-convergence is equivalent to the existence of a deterministic quantity $z$
such that
\begin{align*}
\lim_{\epsilon\downarrow 0}\lim_{N\rightarrow\infty}\pr\left(|N^{-1}\log Z(\G_N)-z|>\epsilon\right)=0.
\end{align*}
It is easy to construct examples of trivially converging graph sequences. For example suppose $E(\G_N)=\emptyset$.  Then
\begin{align*}
Z(\G_N)=\prod_u \int_{x\in\R}h_u(x)dx,
\end{align*}
implying that $N^{-1}\log Z(\G_N)$ is an average of i.i.d. sequence of random variables distributed as $\log \int_{x\in\R}h_u(x)dx$.
The limit
equals $\E\log \int h_u(x)dx$, if this expectation is finite, or is some sort of a stable law otherwise.
In general, it is easy to see that if $G_N$ is a disjoint
union of $N/k$ identical graphs with $k$ nodes, where $k$ is constant, then $G_N$ is right-converging.

The notion of right-convergence comes in contrast to the notion of left-convergence~\cite{benjamini_schramm},
which is roughly defined as convergence of constant depth neighborhoods of randomly uniformly chosen nodes in $\G_N$. We do not provide a formal
definition of left-convergence since we will not be working with this notion in this paper. It is known that right-convergence implies
left-convergence~\cite{BorgsChayesKahnLovasz}, but very simple examples exist showing that converse is not true (see again~\cite{BorgsChayesKahnLovasz}).

In this paper we are interested in right-convergence of sequences of \emph{random} graphs. A sequence of random graphs $\G_N, N\ge 1$
is defined to be right-convergent, if sequence of graph  is right-converging in probability
with respect to the randomness associated both with the graph and potentials $h_u,J_e$.
In this paper we
consider exclusively the following sequence of sparse random graphs on $N$ nodes, also known as (sparse) \ER graph,
denoted by $\G(N,c)$.
A constant $c>0$ is fixed. For each $j=1,2,\ldots,\lfloor cN\rfloor$ the $j$-th edge is an ordered $K$ tuple $e_j=(u^{e_j}_1,\ldots,u^{e_j}_K)$  chosen
uniformly at random from the set of nodes $1,2,\ldots,N$, repetition allowed. It is not hard to show that
the graph sequence $\G(N,c), N\ge 1$ is right-converging when $c<1/K$, as in this case the graph breaks down into a disjoint union
of linearly many graphs with a constant average size~\cite{BollobasBook},\cite{JansonBook}. A far more interesting case is when $c>1/K$, as in this
case a giant (linear) size connected component exists and understanding the limit of the log-partition function in this regime is far from trivial.

\section{Conjectures and the main result}\label{section:ConjecturesResults}
\subsection{Assumptions and main results}\label{subsection:AssumptionsResults}
Our main goal is establishing the conditions under which the sequence of \ER graphs is right-converging. Since there is no
particular reason as to why there should exist any measures $\nu_h,\nu_J$ and any $c>0$ such that $\G(N,c)$ would not be right-converging with respect
to $\nu_h,\nu_J$, and since at the very least no counterexamples are known to the day, the following conjecture seems plausible.

\begin{conj}\label{conjecture:GeneralRight-Convergence}
For every $c>0$, the random graph sequence $\G(N,c)$ is right-converging.
\end{conj}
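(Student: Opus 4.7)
The plan is to establish right-convergence in the usual two-stage way: first prove concentration of $N^{-1}\log Z(\G(N,c))$ about its mean, and then prove convergence of the sequence of means via an interpolation / subadditivity argument. Concentration should be approachable by a bounded-differences / Doob-martingale argument in the edge sequence $e_1,\ldots,e_{\lfloor cN\rfloor}$ together with the i.i.d.\ draws $h_u,J_e$: swapping one edge or one potential perturbs $\log Z$ by an $O(1)$ amount on the event where $Z>0$, so an Azuma--Hoeffding inequality should yield deviations of order $N^{-1/2}$. The delicate point is the continuous-spin regime, where $Z(\G)$ can in principle be very small; one has to truncate on an event where partition functions are bounded below and handle the continuous state space carefully, presumably using the positive-interaction spin assumption discussed in the introduction to guarantee a deterministic lower bound.

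For the convergence of $\E[N^{-1}\log Z(\G(N,c))]$ I would follow the Guerra--Toninelli interpolation scheme used throughout the literature cited in the excerpt. Fix $N=N_1+N_2$ and build a one-parameter family of random graphs $\G_t$, $t\in[0,1]$, such that at $t=1$ one recovers $\G(N,c)$ while at $t=0$ one obtains the disjoint union $\G(N_1,c)\sqcup\G(N_2,c)$. Concretely, each of the $\lfloor cN\rfloor$ edges is, independently, placed uniformly on $\{1,\ldots,N\}$ with probability $t$ and otherwise split probabilistically between the two blocks with weights $N_1/N$ and $N_2/N$. Differentiate $\Phi(t)\triangleq \E[\log Z(\G_t)]$ in $t$ and expand the derivative using the identity $\partial_t\log Z=Z^{-1}\partial_t Z$; the resulting expression is a linear combination of Gibbs expectations of products of edge potentials over two independent replicas. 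If one can show that $\Phi'(t)$ has a definite sign, then $\E\log Z$ is super- or sub-additive along $N=N_1+N_2$, and a Fekete-type lemma (adapted for the linear-in-$N$ edge count) yields the limit.

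The central obstacle is the sign of $\Phi'(t)$: after the usual replica expansion the derivative takes the schematic form
\begin{align*}
\Phi'(t) \;=\; c\,\E\Bigl\langle \log \tfrac{\langle J(\sigma)\rangle_N}{(N_1/N)\langle J(\sigma)\rangle_{N_1}+(N_2/N)\langle J(\sigma)\rangle_{N_2}}\Bigr\rangle ,
\end{align*}
and positivity of this quantity is exactly a convexity/Jensen-type property of the tensor built from $J$ on the space of edge-marginals of the Gibbs measure. This is what Theorem~1 of the paper isolates as the hypothesis on $W$. For target graphs $W$ without such convexity, spin-glass-like cancellations can and do occur, and at present no argument is known that produces a one-signed interpolation for arbitrary $W$. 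Thus the main difficulty in attacking Conjecture~\ref{conjecture:GeneralRight-Convergence} in full generality is not the concentration step nor the setting-up of an interpolation, but finding an interpolation (or a pair of matched upper/lower interpolations) whose derivative has a controllable sign without any convexity assumption on the edge potential.

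A realistic fallback plan, if one abandons hope of a single monotone interpolation, is to try to sandwich $\E\log Z(\G(N,c))$ between two quantities that do converge: an upper bound from a convex relaxation of $J$ (to which the paper's theorem applies) and a matching lower bound obtained by a second moment / small-subgraph conditioning method, or by a cavity-style fixed point on the local weak limit (the Galton--Watson tree with offspring $\mathrm{Pois}(cK)$). One would then hope to close the gap using a subadditive ergodic argument on permutation-invariant functionals of $\G(N,c)$. I expect this last step, rather than the interpolation mechanics themselves, to be where the real work lies, and it is presumably why the excerpt only states the general case as a conjecture and proves the convex case as its main theorem.
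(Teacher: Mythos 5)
The statement you are asked to prove is Conjecture~\ref{conjecture:GeneralRight-Convergence}, which the paper explicitly leaves \emph{open}: there is no proof in the paper to compare against. What the paper establishes is Theorem~\ref{theorem:MainResult}, the special case in which the expected tensor product $\E\bigotimes_l(\alpha-J)$ is convex, and your outline correctly identifies that this convexity is exactly what gives the interpolation its sign. Your self-diagnosis of the obstruction matches the paper's position precisely: the concentration step and the interpolation framework both go through in general, but without convexity there is no argument making the interpolation one-signed, which is why the general statement remains a conjecture.

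Two technical points in your sketch are slightly off relative to the paper's actual machinery for the convex case. First, the bounded-differences increment is \emph{not} $O(1)$: Lemmas~\ref{lemma:NodeChange} and~\ref{lemma:EdgeChange} bound the change in $\log Z$ by a multiple of the local degree $|\mathcal{N}(\cdot,\G)|$, and in $\G(N,c)$ the maximum degree grows roughly like $\log N/\log\log N$. Proposition~\ref{prop:Concentration} therefore has to truncate the Doob martingale at the stopping time where a degree first exceeds $\log N$, paying an $N^{-O(\log\log N)}$ error, rather than invoking plain Azuma with constant increments. Second, the paper does not differentiate a continuously-parametrized $\Phi(t)$; it uses a discrete interpolation $\G(N,c,t)$ for $t=0,1,\ldots,\lfloor cN\rfloor$, removes one uniform edge and adds one block edge per step, and compares $\E\log Z$ at consecutive $t$ via a termwise Taylor expansion of $\log$ around $\alpha Z(\G_0)$ (justified by monotone convergence since $\alpha\ge J_{\max}$ forces $\alpha Z(\G_0)\ge Z$), reducing the inequality to the moment comparison~(\ref{eq:leftright}). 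That is where the convexity hypothesis enters, as the Jensen step $\langle e^{N,r},\cdot\rangle\le\sum_j\frac{N_j}{N}\langle e^{N,r,j},\cdot\rangle$ on the multilinear form. Your ``replica expectation'' presentation is morally the same computation, but the paper's discrete, termwise version is what makes the $\alpha\ge J_{\max}$ condition and the domain $\R_+^{n^r}$ in the convexity assumption necessary. Your fallback ideas (matched upper/lower interpolations, second-moment and cavity arguments) go beyond the paper and would indeed be the kind of new input needed; the paper offers no such route, which is consistent with its stating the general case only as a conjecture.
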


Now we turn to the Talagrand's Problem 6.7.2 in~\cite{TalagrandBook}, mentioned above.
Framed in our terminology, it corresponds to the special case of \ER graph convergence
when $J$ takes only values $0$ and $1$ and $h$ corresponds to a Gaussian
Kernel. There are compelling reasons to consider even this special case and the details can be found in the aforementioned book. Furthermore,
it is over interest to consider even more restricted case when the sets defined by $J=1$ condition are convex. Since the motivation behind the latter
assumption is beyond the scope of the present paper, we will not focus on it here.

The precise statement of Talagrand's conjecture is as follows.
\begin{conj}[Research Problem 6.7.2 in~\cite{TalagrandBook}]\label{conjecture:Talagrand}
For every $c>0$, the random graph sequence $\G(N,c)$ is right-converging when $h(x)=\exp(-x^2)$ and
$J\in \{0,1\}$.
\end{conj}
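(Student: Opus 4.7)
The plan is to derive Conjecture~\ref{conjecture:Talagrand} as a special instance of the paper's main right-convergence theorem, by specializing the potentials to $h(x)=\exp(-x^2)$ and $J\in\{0,1\}$ and then verifying the convexity-of-tensor-product hypothesis in this setting. In our notation the partition function reads
\[
Z(\G) = \int_{\R^N}\exp\!\Bigl(-\sum_{u=1}^N x_u^2\Bigr)\prod_{e\in E(\G)}\mathbf{1}_{A_e}(x_{u^e_1},\ldots,x_{u^e_K})\,dx,
\]
where $A_e\subseteq\R^K$ is the random set $\{J_e=1\}$ drawn i.i.d.\ from $\nu_J$. Up to the universal factor $\pi^{N/2}$, this is the Gaussian measure of $\bigcap_e A_e$ viewed as a random subset of $\R^N$, which is exactly the quantity whose limit Talagrand conjectures to exist.

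First I would translate the paper's convexity assumption into this setting. The assumption is a log-convexity statement about the $m$-replicated $\nu_J$-averages of $\prod_{\ell=1}^{m}J(\sigma^{(\ell)})$ integrated against the single-node Gaussian density $h(x)dx$. For Gaussian $h$ and indicator $J$ the replicated integrals are Gaussian measures of intersections of product sets in $(\R^K)^m$, and the needed log-convexity in the replica overlap structure should follow from Gaussian correlation and Pr\'ekopa-type inequalities, provided the slices $\{J=1\}$ are log-concave (for example convex). This is the restricted subcase Talagrand himself flags as of independent interest, and establishing the assumption at this level would deliver the conjecture on that class. For arbitrary $J\in\{0,1\}$ the convexity of the replicated integral does not obviously hold; this is the principal conceptual obstacle and is why I expect the full Conjecture~\ref{conjecture:Talagrand} to remain open even modulo the machinery here.

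Granting the convexity assumption, the interpolation of Section~\ref{section:Interpolation} applies with minimal adaptation. I would build a one-parameter family $\G_t$ of random graphs on $N=N_1+N_2$ nodes, with edges at $t=0$ confined to the two blocks $V_1,V_2$ at Poisson rates $cN_1$ and $cN_2$ and at $t=1$ drawn uniformly on all of $V$ at rate $cN$, and then differentiate $\E[\log Z(\G_t)]$ in $t$. The derivative is a sum over added-edge terms that expand into a difference of replicated Gibbs averages whose sign is controlled precisely by the tensor-product convexity just verified. Super-additivity of $\E[\log Z(\G(N,c))]$ in $N$ then gives, via Fekete's lemma, the existence of the deterministic limit $z(c)=\lim_N N^{-1}\E[\log Z(\G(N,c))]$.

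Converting convergence in mean to convergence in probability is where I expect the real difficulty, and where I would invest the most effort. Because $\sigma$ ranges over $\R$ and $\log Z$ can be arbitrarily negative whenever an edge indicator forces $Z$ into a thin region near zero, the bounded-differences and standard Azuma-type arguments used in the discrete-spin work of \cite{BayatiGamarnikTetali} and \cite{AbbeMontanari} do not apply. I would follow the route sketched in Section~\ref{section:Concentration}: truncate the integration domain to a Euclidean ball of radius $R\sqrt{N}$, exploit the Gaussian tail of $h$ to show the truncation is negligible on an event of overwhelming probability, and on that event use that $\log Z$ becomes Lipschitz in the randomness of each edge, so an edge-by-edge martingale decomposition over $\G(N,c)$ gives the desired concentration. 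The delicate point, absent in the discrete case, is that edges whose random set $A_e$ has unusually small Gaussian measure can cause $\log Z$ to drop by an unbounded amount in a single step; the truncation must therefore be paired with a quantitative high-probability bound excluding such atypically restrictive edges, and this coupling of truncation with a large-deviation control on edge potentials is the technical heart of the argument.
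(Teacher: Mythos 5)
You have correctly identified that the statement is a conjecture, not a theorem, and that the machinery of the paper yields at best a special case; the paper itself proves only Corollary~\ref{coro:Talagrand2dim}, which handles $J(x,y)=1-\sum_{r\ge1}\gamma_r\mb{1}\{x,y\in A_r\}$ with a nondegenerate block, under the additional soft-core Assumption~\ref{assumption:SoftStates}. That said, two of your suggested intermediate routes diverge from what the paper does and one of them is in fact blocked.

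Your plan to verify the tensor-convexity hypothesis via Gaussian correlation or Pr\'{e}kopa-type inequalities for log-concave/convex slices $\{J=1\}$ is not what the paper uses and, more importantly, cannot extend the class treated. For $K=2$ and $J\in\{0,1\}$ the paper proves an exact characterization: the $2$-dimensional and $3$-dimensional determinant computations in Section~\ref{subsection:examples} show that $\alpha-J$ is positive semi-definite for some $\alpha$ only if the relation $J(x,y)=0$ is a (reflexive, symmetric, transitive) equivalence, i.e.\ $J$ must have the block-indicator form (\ref{eq:J}). Convexity of the slices neither implies nor is implied by this; there is no further mileage to be gained inside the paper's framework from Pr\'{e}kopa, because the convexity assumption already forces $J$ into exactly the family of Corollary~\ref{coro:Talagrand2dim}. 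You correctly flag the convexity assumption as the conceptual bottleneck, but the paper sharpens this: for $K=2$ the bottleneck is a hard wall, not a matter of finding better functional inequalities.

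On concentration, your worry that an edge indicator could drive $\log Z$ to $-\infty$ is real for the full conjecture but is precisely what Assumption~\ref{assumption:SoftStates} part~3 is designed to exclude: the existence of a positive-measure soft region $[0,\kappa)$ with $J\ge\rho_{\min}>0$ gives the two-sided bounds of Lemmas~\ref{lemma:NodeChange} and~\ref{lemma:EdgeChange}, after which the martingale increments are bounded by a degree-dependent quantity. The paper's truncation is therefore over the maximum degree (stopping the martingale when a node of degree $>\log n$ appears), not over the radius of the integration domain; the Gaussian tail of $h$ plays no role beyond ensuring $\int h<\infty$ in (\ref{eq:hLowerUpper}). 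The soft-core requirement is also the reason Corollary~\ref{coro:Talagrand2dim} needs $\gamma_{r_0}=0$ and $[0,\kappa)\subset A_{r_0}$ for some $r_0$: without a soft slice, $Z$ can vanish and the whole scheme, interpolation included, breaks down. Your proposal does not impose this and so would not go through even in the restricted class the paper handles.
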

Despite the Gaussian distribution suggested by the kernel $h(x)=\exp(-x^2)$, one should be aware of the fact that we are dealing here with the case of
a deterministic node potential. In fact, all of our previous examples also correspond to deterministic node potentials as well. The author is not
aware of  interesting models with genuinely random node potentials studied in the past, but since the techniques of the present paper easily
extend to the case of random node potentials, they are allowed in our model.

We now turn to assumptions needed for the statement and the proof of our main result.

\begin{Assumption}\label{assumption:SoftStates}
There exist values $\kappa>0, \rho_{\max}\ge \rho_{\min}>0$, $0<J_{\max}\le \rho_{\max}$ and $\Omega_h\subset \R$
such that
\begin{enumerate}
\item $[0,\kappa]\subset \Omega_h$, and, almost surely with respect to $\nu_h$, $h(x)=0$ for all $x\notin\Omega_h$.
Namely, the support of $h$ lies in $\Omega_h$ almost surely.
Furthermore,
\begin{align}
\rho_{\min}\le \int_{x\in [0,\kappa]} h(x)dx\le \int_\R h(x)dx\le \rho_{\max} \label{eq:hLowerUpper}
\end{align}
almost surely.
\item
$\sup_{x\in \R}J(x)\le J_{\max}$ almost surely.
\item For every $i=1,2,\ldots,K$ almost surely
\begin{align*}
\{x\in\R^K: x_i\in [0,\kappa), x_k\in\Omega_h, 1\le k\le K\}\subset \{x\in\R^K: J(x)\ge \rho_{\min}\}.
\end{align*}

\end{enumerate}
\end{Assumption}
The last assumption means that there is a positive measure set of spin values which have a positive interaction with
every other spin value choices within $\Omega_h$.
In the statistical physics terminology this means that continuous spin values in $[0,\kappa)$ have a positive ("soft")
interaction with all other spin values.

Let us now verify that Assumption~\ref{assumption:SoftStates} holds for all models discussed in Subsection~\ref{subsection:examples}.
For all of these models we take $\Omega_h=[0,q-1]$. Namely, it is the range of continuous representation of the discrete set of colors $0,1,\ldots,q-1$.
For the case of Independent Set model the parts 1) and 2) are verified by taking $\kappa=1,\rho_{\min}=\min(\lambda,1),
\rho_{\max}=1+\lambda$, and $J_{\max}=1$. The assumption 3) concerning the soft core interaction is verified
as well since $J$ takes value $1$ as long as at least one of the arguments of $J$ belongs to $[0,1]$.

The Assumption~\ref{assumption:SoftStates} is verified in a  straightforward way for Partial Coloring and Ising models, as in this case
$J\ge \exp(-\beta)>0$ for any choice of spin values. We set $\kappa=q-1$ so that all values in the domain $\Omega$ "qualify" for soft-core interaction.
We set $\rho_{\max}=\max(\max(1,h)q), J_{\max}=\exp(|\beta|)$, and $\rho_{\min}=\min(\min(1,h),\exp(-|\beta|)$.
For the Viana-Bray model we set
$J_{\max}=\rho_{\max}=\max(h,\exp(\beta c_I))$, where $[-c_I,c_I]$ is the support of $I$.
The remaining parts of the assumptions for the Viana-Bray and verification of the assumptions for XOR is similar.
For the K-SAT model we  set $J_{\max}=1$, and $\kappa=\rho_{\max}=2, \rho_{\min}=\exp(-\beta)$. It is easy to check that
Assumption~\ref{assumption:SoftStates} is verified.

In general it is easy to  see that for discrete deterministic models,
the only non-trivial part of Assumption~\ref{assumption:SoftStates} is the existence of a state with soft interactions.
Namely, the existence of a state $q_0\in \{0,1,\ldots,q-1\}$ such that for every $k=1,\ldots,K$ and every $i_1,\ldots,i_K\in \{0,1,\ldots,q-1\}$ such
that $i_k=q_0$, we have
$J(i_1,\ldots,i_K)>0$. In this case we can take
\begin{align}
J_{\max}&=\max_{0\le i_1,\ldots,i_K\le q-1}J(i_1,\ldots i_K), \label{eq:rhoMaxK2Jdeterministic1}\\
\rho_{\max}&=\max\left(J_{\max},q\max_i h(i)\right). \label{eq:rhoMaxK2Jdeterministic2}\\
\rho_{\min}&=\min_{1\le k\le K}\min_{i_1,\ldots,i_K: i_k=q_0}J(i_1,\ldots,i_K). \label{eq:rhoMinK2Jdeterministic2}
\end{align}

We now turn to our next key assumption regarding the convexity property of the edge potentials $J_e$.
For this purpose we need to resort to the notions of multidimensional arrays and their tensor products.
An $n$-dimensional array $A$ of $K$-th order  is ordered set of real values of the form $A=(a_{i_1,\ldots,i_K}, 1\le i_1,\ldots,i_K\le n)$.
For example, arrays of order $2$ are just $n$ by $n$ matrices. A tensor product of two arrays $A=(a_{i_1,\ldots,i_K})$ and $B=(b_{i_1,\ldots,i_K})$,
of the same dimension $n$ and order $K$ is an $n^2$ dimensional array of order $K$ denoted by
$A\otimes B$, where for each $(i_{1,1},i_{2,1}),\ldots,(i_{1,K},i_{2,K})$ the corresponding entry of $A\otimes B$ is
 $a_{i_{1,1},\ldots,i_{1,K}}b_{i_{2,1},\ldots,i_{2,K}}$.
Given a convex set $S\subset \R^n$, an
array $A$ is defined to be convex over $S$  if the following multilinear form defined on $y=(y_1,\ldots,y_n)\in S$ is convex:
\begin{align}\label{eq:quadraticfunctional}
(y_1,\ldots,y_n)\rightarrow \sum_{1\le i_1,\ldots,i_K\le n}y_{i_1}y_{i_2}\cdots y_{i_K}a_{i_1,\ldots,i_K}.
\end{align}
When $S=\R$ we simply say the array is convex.
For example, the second order array is convex if and only if it is positive semi-definite.
On the other hand, consider a one-dimensional array of order $K=3$, which is defined by a single number $a>0$.
The corresponding multilinear form is just $y\rightarrow ay^3$
is convex over $\Omega=\R_+$ but is not convex over $\Omega=\R$. This observation will be useful in our analysis of the random K-SAT
problem when $K$ is odd.
We write $\langle y,A\rangle$ for the expression on the right-hand side of (\ref{eq:quadraticfunctional}) for short.
We now state our main result.

\begin{theorem}\label{theorem:MainResult}
Suppose the Assumption~\ref{assumption:SoftStates} holds. Suppose further that
there exists constant $\alpha\ge J_{\max}$ such that for every $x_1,\ldots,x_r\in\Omega_h^n$
the expected tensor product
\begin{align}\label{eq:ExpectedTensorProduct}
\E\bigotimes_{1\le l\le r} A_l
\end{align}
is convex on the set $\R_+^{n^r}$, where $A_l$ is an $n$-dimensional array of order $K$ defined by
\begin{align*}
A_l=\alpha-J=\left(\alpha-J(x^l_{i_1},\ldots,x^l_{i_K}), ~1\le i_1,\ldots,i_K\le n\right).
\end{align*}
Then the graph sequence $\G(N,c)$ is right-converging with respect to $\nu_h,\nu_J$.
\end{theorem}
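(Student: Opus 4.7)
The proof follows the standard interpolation-plus-Fekete strategy. First I would establish that $N^{-1}\log Z(\G(N,c))$ concentrates around its expectation with high probability, so that right-convergence reduces to the existence of $\lim_N N^{-1}\E[\log Z(\G(N,c))]$. Second, I would use Guerra-Toninelli style interpolation to prove the super-additivity bound
\begin{align*}
\E\bigl[\log Z(\G(N_1+N_2,c))\bigr]\ge \E\bigl[\log Z(\G(N_1,c))\bigr]+\E\bigl[\log Z(\G(N_2,c))\bigr],
\end{align*}
which combined with a uniform upper bound on $N^{-1}\E[\log Z]$ from Assumption~\ref{assumption:SoftStates} and Fekete's lemma yields a deterministic limit $z$. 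Concentration then upgrades this to convergence in probability.

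\textbf{Interpolation and the derivative.} Fix $N=N_1+N_2$ and partition the vertex set into $V_1=\{1,\ldots,N_1\}$ and $V_2=\{N_1+1,\ldots,N\}$. For $t\in[0,1]$ define $\G_t$ as the superposition of three independent Poisson edge processes with intensities $cNt$, $cN_1(1-t)$ and $cN_2(1-t)$, whose edges are drawn uniformly from $\{1,\ldots,N\}^K$, $V_1^K$ and $V_2^K$ respectively, each carrying an independent copy of $J_e$. Then $\G_0$ is the disjoint union of $\G(N_1,c)$ and $\G(N_2,c)$, $\G_1$ is $\G(N,c)$, and the expected edge count equals $cN$ throughout. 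By the standard differentiation formula for functionals of Poisson point processes,
\begin{align*}
\frac{d}{dt}\E[\log Z(\G_t)]=cN\,\E\bigl[\log(Z^+_{\mathrm{full}}/Z)\bigr]-cN_1\,\E\bigl[\log(Z^+_1/Z)\bigr]-cN_2\,\E\bigl[\log(Z^+_2/Z)\bigr],
\end{align*}
where $Z^+_\bullet$ is the partition function after appending one extra edge of the corresponding type. Each ratio equals $\alpha(1-v_\bullet)$ with $v_\bullet=\E_{\mu_t}[(\alpha-J_{\mathrm{new}})/\alpha]\in[0,1]$ (since $\alpha\ge J_{\max}$), hence $\log(Z^+_\bullet/Z)=\log\alpha-\sum_{r\ge 1} v_\bullet^r/r$ converges absolutely and the $\log\alpha$ contributions cancel across the three terms. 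Introducing $r$ replicas $\sigma^1,\ldots,\sigma^r$ sampled i.i.d.\ from $\mu_t$ and letting $\rho$ (resp.\ $\rho_b$) denote their joint empirical distribution over uniform draws from $V_1\cup V_2$ (resp.\ from $V_b$) evaluated on a fine grid $\{x^l_i\}_{i\le n}\subset\Omega_h$, one computes after integrating out the random endpoints and a single realization of $J_{\mathrm{new}}$,
\begin{align*}
\E[v_{\mathrm{full}}^r]=\alpha^{-r}\,\E\bigl\langle \rho,\,\E\bigotimes_{l=1}^r A_l\bigr\rangle,\qquad \E[v_b^r]=\alpha^{-r}\,\E\bigl\langle \rho_b,\,\E\bigotimes_{l=1}^r A_l\bigr\rangle.
\end{align*}
The key identity $\rho=(N_1/N)\rho_1+(N_2/N)\rho_2$ combined with the convexity of $y\mapsto\langle y,\E\bigotimes_l A_l\rangle$ on $\R_+^{n^r}$ supplied by the hypothesis yields
\begin{align*}
N\bigl\langle \rho,\E\bigotimes A_l\bigr\rangle\le N_1\bigl\langle \rho_1,\E\bigotimes A_l\bigr\rangle+N_2\bigl\langle \rho_2,\E\bigotimes A_l\bigr\rangle,
\end{align*}
and the overall minus sign in the series expansion flips this into $\frac{d}{dt}\E[\log Z(\G_t)]\ge 0$. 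Integrating from $0$ to $1$ gives super-additivity. The matching upper bound $N^{-1}\E[\log Z(\G(N,c))]\le \log\rho_{\max}+cK\log\rho_{\max}$ follows from Assumption~\ref{assumption:SoftStates} by comparing $Z$ to $\rho_{\max}^N J_{\max}^{|E|}$, and Fekete produces $z$.

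\textbf{Concentration and main obstacles.} To go from convergence of means to convergence in probability I would set up the Doob martingale of $\log Z(\G(N,c))$ by revealing the edges $e_j$, edge potentials $J_{e_j}$ and node potentials $h_u$ one at a time, and apply Azuma-Hoeffding. Assumption~\ref{assumption:SoftStates} is exactly what is needed to produce bounded differences in the continuous-spin setting: restricting the defining integral of $Z$ to the soft-core box $[0,\kappa]^N$ yields the a priori lower bound $Z\ge \rho_{\min}^N\rho_{\min}^{|E|}$, while $Z\le \rho_{\max}^N J_{\max}^{|E|}$ is immediate, so replacing one edge, one edge potential, or one node potential moves $\log Z$ by at most $\log(\rho_{\max}/\rho_{\min})+\log(J_{\max}/\rho_{\min})$, giving $O(N^{-1/2}\sqrt{\log N})$ deviations. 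The two main technical obstacles are: (i) the continuous-spin concentration, where without the soft-core assumption a single bad realization could send $\log Z$ to $-\infty$; and (ii) extracting the tensor-product form from the replica series when the spin space is continuous, which requires discretizing $\Omega_h$ by a fine grid $\{x^l_i\}$, applying the convexity hypothesis at each $n$, and passing $n\to\infty$ via dominated convergence (justified term-by-term by $(\alpha-J)/\alpha\in[0,1]$ and globally by the $1/r$ factors in the log expansion).
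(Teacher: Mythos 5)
Your interpolation argument is sound in spirit and yields the same super-additivity; the main difference from the paper's route is cosmetic. You use a continuous-time Poisson interpolation with three independent Poisson edge clouds and show $\frac{d}{dt}\E[\log Z(\G_t)]\ge 0$, whereas the paper uses a discrete interpolation $\G(N,c,t)$, $t=0,\ldots,\lfloor cN\rfloor$, and shows $\E[\log Z(\G(N,c,t))]$ decreases in $t$ by comparing the effect of adding one uniform edge versus one restricted edge. Both reduce to the identical Taylor expansion around $\alpha Z(\G_0)$, the identical tensor-product reformulation, and the identical convexity inequality on the diagonal vectors. Two points worth noting: (a) your $\G_0$ and $\G_1$ are Poisson-number-of-edges objects, not $\G(N_j,c)$ and $\G(N,c)$, so a de-Poissonization step is still needed (the paper handles the analogous binomial-to-$\lfloor cN_j\rfloor$ adjustment with Lemma~\ref{lemma:EdgeChange} and an $O(\sqrt N)$ correction absorbed by a near-superadditive Fekete lemma); and (b) your discretization of $\Omega_h$ by a grid and the $n\to\infty$ limit is unnecessary. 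The paper applies the convexity hypothesis directly with $n=N$: take $x^l\in\Omega_h^N$ to be the spin configuration of the $l$-th replica and the vector $e^{N,r}$ to be the normalized diagonal indicator in $\R_+^{N^r}$; the identity $e^{N,r}=(N_1/N)e^{N,r,1}+(N_2/N)e^{N,r,2}$ plus convexity gives the inequality with no approximation of the continuous spin space.

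The genuine gap is in your concentration step. You assert that "replacing one edge, one edge potential, or one node potential moves $\log Z$ by at most $\log(\rho_{\max}/\rho_{\min})+\log(J_{\max}/\rho_{\min})$," citing the a priori bounds $\rho_{\min}^{N+|E|}\le Z\le\rho_{\max}^{N+|E|}$. Those bounds constrain the global range of $\log Z$ to $O(N)$; they say nothing about the one-step increment of a Doob martingale, and a degree-independent increment bound in fact fails under Assumption~\ref{assumption:SoftStates} alone. To lower-bound $Z(\G+e)/Z(\G)=\E_{\mu_\G}[J_e]$ one must lower-bound the Gibbs mass of the soft region $\{x_{v_1}\in[0,\kappa)\}$; under the stated assumptions each of the $|\mathcal{N}(v_1,\G)|$ other edge potentials touching $v_1$ can penalize the soft state by a factor up to $J_{\max}/\rho_{\min}$, so the only generic bound is of order $(\rho_{\min}/\rho_{\max})^{|\mathcal{N}(v_1,\G)|}$. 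This is exactly why the paper's Lemmas~\ref{lemma:NodeChange} and~\ref{lemma:EdgeChange} give increments proportional to $|\mathcal{N}(\cdot,\G)|$, and since the maximum degree of $\G(N,c)$ is not $O(1)$ but only $O(\log N)$ with high probability, Azuma–Hoeffding cannot be applied directly. The paper resolves this by truncation: it reveals nodes one at a time, bounds the $n$-th increment by $c_3 D_n^2$ (where $D_n$ is the running max degree), stops the martingale at the first $n$ with $D_n>\log n$, applies Azuma to the stopped martingale with increments $O(\log^2 N)$, and pays for the bad event with the $N^{-O(\log\log N)}$ tail estimate (\ref{eq:DegreeBound}); the resulting deviation scale is $\log^3 N/\sqrt N$, not the $\sqrt{\log N/N}$ you claimed. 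You would need to insert this truncated-martingale argument (or identify an additional model-specific reason for degree-independent increments, which the general hypotheses do not supply) to make the concentration step correct.
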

We note that the tensor product $\bigotimes_{1\le l\le r} A_l$ is an $n^r$-dimensional array of order $K$,
the same copy of random $J$, generated according to $\nu_J$ is used in this tensor product, and the expectation is with respect to the randomness of $J$.
The expectation operator when applied to arrays is understood componentwise.

As we will show in the next section, Theorem~\ref{theorem:MainResult} covers many special cases, some of them already covered in the literature.
In particular, the right-convergence for K-SAT and Viana-Bray models was established in~\cite{FranzLeone}, and the right-convergence for
Independent Set, Ising and Coloring models was established in~\cite{BayatiGamarnikTetali}. The right-convergence for
the K-SAT and XOR model with hard-core interaction
was established in~\cite{AbbeMontanari}, but is not covered by our theorem, since the part 3 of Assumption~\ref{assumption:SoftStates} fails for this model.

\subsection{Examples and special cases}\label{subsection:examples}
Let us verify that the convexity assumption of Theorem~\ref{theorem:MainResult} holds for many examples, including our
examples in Subsection~\ref{subsection:Models}. For most of the example we will be able to verify convexity of the expected
tensor product (\ref{eq:ExpectedTensorProduct}) on the entire $\R^{n^r}$ as opposed to $\R_+^{n^r}$.
In particular, let us now focus  on discrete models with $q$ spin values.
Observe that we can bypass the embedding of the discrete model into a continuous model via (\ref{eq:DiscreteContinuous}).
Furthermore, regarding the special case when $K=2$ and $J$ is deterministic,
it is well-known that the product of positive semi-definite matrices is positive semi-definite.
Thus it suffices to assert the convexity of each individual matrix $\alpha-J(x_i,x_j), x=(x_1,\ldots,x_n)\in \R^n$, rather than their
tensor product. Finally, since $x_i$ take arbitrary values in $0,1,\ldots,q-1$, it suffices to simply verify the convexity
of the $q\times q$ matrix $(\alpha-J(i,j), 0\le i,j\le q-1)$. Recall our earlier  observation that Assumption~\ref{assumption:SoftStates}
holds in this case if there exists $i_0$ such that $J(i_0,j)>0$ for all $j$, namely $\max_i\min_jJ(i,j)>0$. In this case
$J_{\max}$ can be set as (\ref{eq:rhoMaxK2Jdeterministic1}). We  obtain the following result.

\begin{theorem}\label{theorem:JdetermK2}
Suppose $K=2$ and $J$ is a deterministic edge potential such that $\max_i\min_jJ(i,j)>0$ and
the matrix $\left(\alpha-J(i,j), 1\le i,j\le q-1\right)$ is positive semi-definite
for some $\alpha\ge J_{\max}$, where $J_{\max}$ is defined by (\ref{eq:rhoMaxK2Jdeterministic1}).
Then the sequence $G(N,c)$ is right-converging.
\end{theorem}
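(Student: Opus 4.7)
My plan is to derive Theorem~\ref{theorem:JdetermK2} as a direct specialization of Theorem~\ref{theorem:MainResult}. Only two things need checking: that Assumption~\ref{assumption:SoftStates} holds for the discrete model viewed through the embedding (\ref{eq:DiscreteContinuous}), and that the expected tensor product condition of Theorem~\ref{theorem:MainResult} follows from the PSD hypothesis on $\alpha - J$.

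\textbf{Verifying Assumption~\ref{assumption:SoftStates}.}
I would first relabel the $q$ colors so that the ``soft'' spin $i_0$ guaranteed by $\max_i \min_j J(i,j) > 0$ becomes color $0$, and then pass to the continuous framework via (\ref{eq:DiscreteContinuous}) with $\Omega_h = [0,q-1]$ and $\kappa = 1$. Parts 1 and 2 of the assumption are immediate with $J_{\max}, \rho_{\max}, \rho_{\min}$ chosen as in (\ref{eq:rhoMaxK2Jdeterministic1})--(\ref{eq:rhoMinK2Jdeterministic2}); the integrals of $h$ are finite linear combinations of its $q$ values smeared over unit intervals. For part 3, any configuration with a coordinate in $[0,\kappa) = [0,1)$ has that coordinate mapped to color $0$, and the soft-state hypothesis, applied to both row and column positions (as is appropriate for $K=2$ after relabeling), supplies a uniform positive lower bound on $J$ at such configurations.

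\textbf{Verifying the convexity hypothesis.}
For $K=2$ each array $A_l$ is just an $n \times n$ matrix. Let $M$ denote the $q \times q$ matrix $(\alpha - J(i,j))_{0 \le i,j \le q-1}$, which is PSD by assumption, and introduce the $n \times q$ zero-one selection matrix $B_l$ whose $(a,k)$ entry is $\mb{1}(\lfloor x^l_a \rfloor = k)$ under the embedding. Then
\[
A_l \;=\; B_l\, M\, B_l^T,
\]
so $v^T A_l v = (B_l^T v)^T M (B_l^T v) \ge 0$ for every $v \in \R^n$, and $A_l$ is PSD even though its rows and columns may repeat. Since $J$ is deterministic, the expectation in (\ref{eq:ExpectedTensorProduct}) is trivial, and the Kronecker product of PSD matrices is PSD because its spectrum is the set of products of eigenvalues of the factors. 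For $K=2$ the multilinear form in (\ref{eq:quadraticfunctional}) is a quadratic form in $y \in \R^{n^r}$, and is convex on all of $\R^{n^r}$ precisely when the underlying matrix is PSD, which in particular gives convexity on $\R_+^{n^r}$.

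\textbf{Main obstacle.}
There is no serious difficulty: both verifications are essentially bookkeeping. The only conceptual point is the representation $A_l = B_l M B_l^T$, which is what lets positive semi-definiteness of $M$ propagate to each $A_l$ despite the repeated indices coming from $x^l$. Once that is in hand, Theorem~\ref{theorem:MainResult} applies directly and yields the right-convergence of $\G(N,c)$.
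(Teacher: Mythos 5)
Your argument is correct and follows the same route as the paper: verify Assumption~\ref{assumption:SoftStates} from the soft-state condition $\max_i\min_j J(i,j)>0$, observe that for deterministic $J$ the expectation in (\ref{eq:ExpectedTensorProduct}) is trivial, and reduce convexity of the Kronecker product to positive semi-definiteness of the single $q\times q$ matrix $\alpha-J$ via the standard fact that Kronecker products of PSD matrices are PSD. The one thing you make more explicit than the paper is the congruence $A_l = B_l M B_l^T$, which is precisely the justification the paper leaves implicit when it says that checking PSD of the $q\times q$ matrix suffices because the $x_i$ range over $\{0,\dots,q-1\}$; and you correctly (if tersely) note that the soft-state condition must be usable in both coordinate slots, which holds because the PSD hypothesis forces $J$ to be symmetric.
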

Let us apply this result to our examples, beginning with the Independent Set model.
The matrix $\alpha-J$ is
\begin{align*}
\left(
  \begin{array}{cc}
    \alpha-1 & \alpha-1 \\
    \alpha-1 & \alpha \\
  \end{array}
\right),
\end{align*}
which is positive semi-definite. For the case of the Partial Coloring model we  obtain
that $\alpha-J$ is a matrix with diagonal entries $\alpha-\exp(-\beta)$ and off-diagonal entries $\alpha-1\le \alpha-\exp(-\beta)$.
This matrix is positive semi-definite since $\beta>0$ (anti-ferromagnetism assumption).
The situation for the anti-ferromagnetic Ising model is the same, since the only difference is the possible presence of the magnetic field.

Before we turn to other examples, we ask the following question: under what conditions the required $\alpha$ can be found for discrete models?
While we do not have the answer for the general case, the answer for the case of symmetric deterministic positive definite matrices is rather simple
(the author wishes to thank L\'{a}szl\'{o} Lov\'{a}sz  for this observation).
\begin{lemma}\label{lemma:RestrictedConvex}
Suppose $J=(J_{i,j}, 1\le i,j\le n)$ is a deterministic, symmetric $n\times n$ matrix. There
exists  $\alpha_0>0$ such that $\alpha-J$ is positive definite for all $\alpha\ge \alpha_0$
if and only if $-J$ is positive definite on the linear subspace
\begin{align}\label{eq:subspaceR0}
R_0\triangleq \{y\in \R^n: \mb{e}_n^Ty=0\}.
\end{align}
Namely $y^T(-J)y>0$ for every nonzero $y\in R_0$.
\end{lemma}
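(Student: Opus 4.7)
The matrix $\alpha-J$ has entries $\alpha-J_{i,j}$, so it equals $\alpha E-J$ where $E=\mb{e}_n\mb{e}_n^T$ is the $n\times n$ all-ones matrix. For any $y\in\R^n$,
\begin{align*}
y^T(\alpha E-J)y \;=\; \alpha(\mb{e}_n^Ty)^2 \;-\; y^TJy.
\end{align*}
The subspace $R_0$ is precisely the kernel of the rank-one form $y\mapsto(\mb{e}_n^Ty)^2$, and the orthogonal decomposition $\R^n=R_0\oplus\R\mb{e}_n$ will organize both implications. The forward direction ($\Rightarrow$) is immediate: restricting the displayed identity to $y\in R_0\setminus\{0\}$ kills the $\alpha$-term, so positive definiteness of $\alpha E-J$ on $\R^n$ forces $-y^TJy>0$ on $R_0\setminus\{0\}$.

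For the converse ($\Leftarrow$), I would write any $y\in\R^n$ as $y=y_0+t\mb{e}_n$ with $y_0\in R_0$ and $t\in\R$ (so $\mb{e}_n^Ty=nt$) and expand to get
\begin{align*}
y^T(\alpha E-J)y \;=\; (\alpha n^2-c)\,t^2 \;-\; 2t\, b^Ty_0 \;-\; y_0^TJy_0,
\end{align*}
where $c=\mb{e}_n^TJ\mb{e}_n$ and $b=J\mb{e}_n$. The hypothesis, together with compactness of the unit sphere in $R_0$, furnishes $\lambda>0$ with $-y_0^TJy_0\ge\lambda\|y_0\|^2$ for every $y_0\in R_0$. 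Absorbing the cross term by Young's inequality, $|2t\,b^Ty_0|\le\tfrac{\lambda}{2}\|y_0\|^2+\tfrac{2\|b\|^2}{\lambda}t^2$, yields the lower bound
\begin{align*}
y^T(\alpha E-J)y \;\ge\; \tfrac{\lambda}{2}\|y_0\|^2 \;+\; \bigl(\alpha n^2-c-2\|b\|^2/\lambda\bigr)t^2,
\end{align*}
which is strictly positive on $\R^n\setminus\{0\}$ as soon as $\alpha n^2>c+2\|b\|^2/\lambda$; any such $\alpha$ serves as $\alpha_0$.

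The only mildly delicate step is the cross-term estimate, essentially a Schur-complement-style bound showing that the rank-one perturbation by $\alpha E$ can absorb the indefinite part of $-J$ along the $\mb{e}_n$ direction once $\alpha$ is large; everything else is a mechanical consequence of the $R_0\oplus\R\mb{e}_n$ decomposition and the fact that positive definiteness on a subspace upgrades to a uniform lower bound in $\|\cdot\|^2$ via compactness.
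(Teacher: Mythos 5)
Your proof is correct and takes a genuinely different route from the paper's. The forward implication is the same in both: restricting the identity $y^T(\alpha E-J)y=\alpha(\mb{e}_n^Ty)^2-y^TJy$ to $R_0$ kills the $\alpha$-term. For the converse, the paper argues non-quantitatively: it takes $\alpha_r\to\infty$, lets $y_r$ be the minimizer of the quadratic form on the unit sphere (compactness), passes to a subsequential limit $y^*$, and splits into cases according to whether $\mb{e}_n^Ty^*$ vanishes, concluding that the minimum eventually turns positive. This yields positive definiteness for \emph{some} $\alpha$ and implicitly relies on the (trivial but unstated) monotonicity that $\alpha'E-J\succ0$ whenever $\alpha E-J\succ0$ and $\alpha'\ge\alpha$. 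You instead work directly with the orthogonal decomposition $\R^n=R_0\oplus\R\mb{e}_n$, extract the uniform constant $\lambda>0$ from compactness of the unit sphere in $R_0$, and absorb the cross term by Young's inequality to obtain the explicit threshold $\alpha_0 n^2>\mb{e}_n^TJ\mb{e}_n+2\|J\mb{e}_n\|^2/\lambda$. Your approach is more constructive — it produces a computable $\alpha_0$ and proves positive definiteness for \emph{all} $\alpha\ge\alpha_0$ in one stroke, with no appeal to subsequences — while the paper's argument is a shorter-to-state but less informative soft-analysis proof. Both are valid; yours is arguably the cleaner and more complete write-up of the same elementary fact.
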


We obtain the following result.
\begin{theorem}\label{theorem:2dimKernel}
Suppose  $J$ is a deterministic matrix such that $-J$ is positive definite on $R_0$ and $\max_i\min_jJ(i,j)>0$. Then
the random graph sequence $\G(N,c)$ is right-converging.
\end{theorem}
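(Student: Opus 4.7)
The plan is to derive Theorem~\ref{theorem:2dimKernel} as an essentially immediate consequence of the combination of Theorem~\ref{theorem:JdetermK2} and Lemma~\ref{lemma:RestrictedConvex}. Theorem~\ref{theorem:JdetermK2} asserts right-convergence whenever one can exhibit some $\alpha \ge J_{\max}$ for which $\alpha - J$ is positive semi-definite, and Lemma~\ref{lemma:RestrictedConvex} characterizes exactly the matrices $J$ for which such an $\alpha$ exists. So all we need to do is to chain these two results together under the hypotheses of the theorem.

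First, I would record that the hypothesis $\max_i \min_j J(i,j) > 0$ is precisely the soft-state condition for discrete deterministic models discussed right above Theorem~\ref{theorem:JdetermK2}: picking any $i_0$ attaining the outer maximum gives a spin value having positive interaction with every other spin value, so by the formulas (\ref{eq:rhoMaxK2Jdeterministic1})--(\ref{eq:rhoMinK2Jdeterministic2}) the constants $J_{\max}, \rho_{\max}, \rho_{\min}, \kappa$ can be chosen so that Assumption~\ref{assumption:SoftStates} holds for the continuous embedding of $J$ via (\ref{eq:DiscreteContinuous}). In particular $J_{\max}$ is finite.

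Next, I would apply Lemma~\ref{lemma:RestrictedConvex} to the (symmetric) matrix $J$. The hypothesis that $-J$ is positive definite on the hyperplane $R_0 = \{y \in \R^n : \mb{e}_n^T y = 0\}$ is exactly the condition in the lemma, and it furnishes a threshold $\alpha_0 > 0$ such that $\alpha - J$ is positive definite for every $\alpha \ge \alpha_0$. Setting
\begin{align*}
\alpha^* \;\triangleq\; \max(\alpha_0,\, J_{\max}),
\end{align*}
we have $\alpha^* \ge J_{\max}$ and $\alpha^* - J$ is positive definite, hence positive semi-definite. This is precisely the hypothesis needed to invoke Theorem~\ref{theorem:JdetermK2}, which yields right-convergence of $\G(N,c)$ with respect to the corresponding potentials.

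There is no real obstacle in this argument: the heavy lifting is done in Theorem~\ref{theorem:MainResult} (specialized to Theorem~\ref{theorem:JdetermK2}) and in Lemma~\ref{lemma:RestrictedConvex}. The only points worth checking carefully are (i) that symmetry of $J$ is implicit in the $K=2$ undirected setting so that Lemma~\ref{lemma:RestrictedConvex} applies as stated, and (ii) that one may freely replace the threshold $\alpha_0$ by the larger value $\alpha^*$ without losing positive definiteness, which holds because increasing $\alpha$ shifts the spectrum of $\alpha - J$ upward by a nonnegative constant. With these two routine observations the theorem follows.
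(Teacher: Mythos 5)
Your argument is exactly the route the paper takes (the paper gives no explicit proof, presenting the theorem as an immediate corollary of Lemma~\ref{lemma:RestrictedConvex} together with Theorem~\ref{theorem:JdetermK2}), and it is correct. One tiny remark: your observation (ii) is unnecessary, since Lemma~\ref{lemma:RestrictedConvex} already asserts positive definiteness for \emph{all} $\alpha\ge\alpha_0$; moreover the stated justification is slightly off, because $\alpha-J$ here means $\alpha\,\mb{e}_n\mb{e}_n^T-J$, so raising $\alpha$ adds a rank-one PSD matrix rather than uniformly shifting the spectrum --- still PSD-preserving, but not a constant shift.
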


\begin{proof}[Proof of Lemma~\ref{lemma:RestrictedConvex}]
Suppose $\alpha-J$ is positive definite for some $\alpha>0$. Fix any non-zero $y\in R_0$ and observe that
\begin{align*}
0\le y^T(\alpha-J)y=\alpha y^T(\mb{e}_n\mb{e}_n^T)y-y^TJy=\alpha(\mb{e}_n^Ty)^2-y^TJy=-y^TJy,
\end{align*}
implying that $-J$ is positive definite on $R_0$.

Conversely, suppose $-J$ is positive definite on $R_0$.
Fix any sequence $\alpha_r\rightarrow\infty$. Let
\begin{align*}
y_r=\arg\min_{y\in\R^n, \|y\|_2=1} y^T(\alpha_r-J)y=\arg\min_{y\in\R^n,\|y\|_2=1}\alpha_r(\mb{e}^T_ny)^2-y^TJy.
\end{align*}
Such $y_r$ clearly exists by the compactness argument. Find $y^*, \|y^*\|_2=1$ such that $y_r$ converges to $y^*$ along some subsequence $r_l, l\ge 1$.
If $\mb{e}^Ty^*=0$ then $y*\in R_0$, giving $-(y^*)^TJy^*>0$. Then we can find large enough $r$, such that
$\min_{y\in\R^n,\|y\|_2=1}\alpha_r(\mb{e}^T_ny)^2-y^TJy>0$ and the assertion is proven.

On the other hand, if $\mb{e}^Ty^*\ne 0$, then we find $r_0$ large enough so that $\alpha_r(\mb{e}^T_ny^*)^2-(y^*)^TJy^*>0$ for all $r\ge r_0$.
Then since $y_{r_l}\rightarrow y$, we can find $r_l$ large enough so that
$\min_{y\in\R^n,\|y\|_2=1}\alpha_{r_l}(\mb{e}^T_ny)^2-y^TJy>0$ and the assertion is established.
\end{proof}
Interestingly, the definiteness condition in Lemma~\ref{lemma:RestrictedConvex} cannot be relaxed to the case when $-J$ is positive semi-definite.
Indeed let
\begin{align*}
J=\left(
  \begin{array}{cc}
    -1 & 0 \\
    0 & 1 \\
  \end{array}
\right).
\end{align*}
Then $y^T(-J)y=-y_1^2+y_2^2$ which is $0$ when $y_1+y_2=0$. Nevertheless, $\alpha-J$ is never positive semi-definite since the determinant
is $-1<0$ (the author wishes to thank Rob Fruend for this counterexample).

Let us now give an example of a continuous spin model for which the product in  (\ref{eq:ExpectedTensorProduct}) is convex.
Such an example can derived as a continuous analogue of the coloring model
with countably infinitely many colors. In particular we let $K=2$. Fix a countable sequence
of mutually disjoint positive Lebesgue measure sets $A_r\subset \R, r\ge 1$, and positive weights $\gamma_r>0, r\ge 1$ such that $\sup\gamma_r<\infty$.
Fix any $\gamma\ge \sup\gamma_r$ and
define
\begin{align}\label{eq:J}
J(x,y)=\gamma-\sum_{r\ge 1}\gamma_r\mb{1}\{x,y\in A_r\}.
\end{align}
Namely, for every vector $(x_i,1\le i\le n)$, the corresponding matrix is
\begin{align*}
A\triangleq \left(\gamma-\sum_{r\ge 1}\gamma_r\mb{1}\{x_i,x_j\in A_r\}, ~1\le i,j\le n\right).
\end{align*}
Then $\gamma-A$ is a positive
semi-definite matrix since for every $y\in \R^n$,
\begin{align*}
y^T(\gamma-A)y=\sum_r\gamma_r(\sum_{i: x_i\in A_r}y_i)^2\ge 0.
\end{align*}
In the special case $\gamma=1,\gamma_r\in \{0,1\}$
we also obtain $J(x,y)\in \{0,1\}$ which conforms with Talagrand's Conjecture~\ref{conjecture:Talagrand}. Thus in the special
case $h(x)=\exp(-x^2)$ corresponding to $\Omega_h=\R$,
in order to satisfy
Assumption~\ref{assumption:SoftStates} corresponding to the existence of soft states, it suffices to have at least one $r$ such that $\gamma_r=0$
and $[0,\kappa)\subset A_r$ for some $\kappa>0$.
(This requirement can be generalized to the case that $A_r$ contains some positive length interval or even positive measure set, by
suitably relabeling the spin values).
\begin{coro}\label{coro:Talagrand2dim}
Conjecture~\ref{conjecture:Talagrand} holds when $J(x,y)=1-\sum_{r\ge 1}\gamma_r\mb{1}\{x,y\in A_r\},~\gamma_r\in \{0,1\}$
and there exists $r_0$ and $\kappa>0$ such that $\gamma_{r_0}=0$ and $[0,\kappa)\subset A_{r_0}$,
where $A_r$ is any countably
infinite collection of mutually disjoint measurable subsets of $\Omega_h$.
\end{coro}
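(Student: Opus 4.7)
The plan is to derive Corollary~\ref{coro:Talagrand2dim} as a direct instance of Theorem~\ref{theorem:MainResult}: we verify (i) that Assumption~\ref{assumption:SoftStates} holds for $h(x)=\exp(-x^2)$ and $J(x,y)=1-\sum_{r\ge 1}\gamma_r\mathbf{1}\{x,y\in A_r\}$ with $\gamma_r\in\{0,1\}$, and (ii) that the convexity condition on the expected tensor product in~(\ref{eq:ExpectedTensorProduct}) is satisfied for some $\alpha\ge J_{\max}$. Both $h$ and $J$ are deterministic here, so $\nu_h,\nu_J$ are singletons and the expectation in~(\ref{eq:ExpectedTensorProduct}) is trivial.

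For Assumption~\ref{assumption:SoftStates} we take $\Omega_h=\R$, $J_{\max}=1$, $\rho_{\max}=\int_\R e^{-x^2}\,dx=\sqrt{\pi}$, and $\rho_{\min}=\int_0^\kappa e^{-x^2}\,dx$, with $\kappa$ as in the hypothesis. Parts 1 and 2 are then immediate. For the soft-state condition (part 3), consider any $(x_1,x_2)$ with $x_1\in[0,\kappa)$. By hypothesis $[0,\kappa)\subset A_{r_0}$ and the $A_r$ are mutually disjoint, so $x_1\in A_r$ only for $r=r_0$; hence $\mathbf{1}\{x_1,x_2\in A_r\}=0$ for $r\ne r_0$, and the term for $r=r_0$ vanishes because $\gamma_{r_0}=0$. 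Therefore $J(x_1,x_2)=1$. Since $\rho_{\min}\le\int_0^\infty e^{-x^2}\,dx=\sqrt{\pi}/2<1$, we conclude $J(x_1,x_2)\ge\rho_{\min}$, and the same argument applies with the roles of $x_1$ and $x_2$ reversed.

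For the convexity condition we exploit two simplifications specific to this setting. First, since $K=2$, the multilinear form in~(\ref{eq:quadraticfunctional}) attached to any array is just a quadratic form, so convexity over $\R_+^{n^r}$ (in fact over all of $\R^{n^r}$) is equivalent to positive semi-definiteness of the $n^r\times n^r$ matrix $\bigotimes_{1\le l\le r}A_l$. Second, the Kronecker product of positive semi-definite matrices is positive semi-definite, so it suffices to show that each factor $A_l$ is PSD. Taking $\alpha=J_{\max}=1$ and writing $A_l=(\alpha-J(x^l_i,x^l_j))_{1\le i,j\le n}$ entrywise, the computation essentially already performed just before the corollary gives
\begin{align*}
y^T A_l y = (\alpha-1)(\mb{e}_n^T y)^2 + \sum_{r\ge 1}\gamma_r\Bigl(\sum_{i:\, x^l_i\in A_r} y_i\Bigr)^2 \ge 0
\end{align*}
for every $y\in\R^n$, using $\alpha\ge 1$ and $\gamma_r\ge 0$.

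With both assumptions verified, Theorem~\ref{theorem:MainResult} yields right-convergence of $\G(N,c)$ with respect to these $\nu_h,\nu_J$, which is exactly the statement of Conjecture~\ref{conjecture:Talagrand} in the case at hand. We anticipate no serious obstacle in writing this out; the one point needing care is part 3 of Assumption~\ref{assumption:SoftStates}, which is precisely why the hypothesis $[0,\kappa)\subset A_{r_0}$ with $\gamma_{r_0}=0$ is built into the corollary.
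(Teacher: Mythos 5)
Your proposal is correct and follows essentially the same route as the paper: the discussion preceding the corollary already establishes that $y^T(\alpha-J)y=\sum_r\gamma_r\bigl(\sum_{i:\,x_i\in A_r}y_i\bigr)^2\ge 0$ (with $\alpha=\gamma=1$), invokes the fact that the Kronecker product of positive semi-definite matrices is positive semi-definite to dispose of the tensor-product condition, and observes that the soft-state requirement is met precisely because $\gamma_{r_0}=0$ with $[0,\kappa)\subset A_{r_0}$. You merely spell out the verification of Assumption~\ref{assumption:SoftStates} (choice of $\Omega_h=\R$, $\rho_{\min}$, $\rho_{\max}$, $J_{\max}$) in more explicit detail than the paper does, which is fine but not a different argument.
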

Unfortunately, for the case when $K=2$ potentials $J$ are zero-one valued, the example above  is the only
form which can make $\alpha-J$ positive semi-definite for some $\alpha$. Indeed, suppose $J$ is zero-one valued deterministic
edge potential such that the product (\ref{eq:ExpectedTensorProduct}) is convex for every $x_1,\ldots,x_r\in \R^n$.
Let $A_0$ be the set of $x$ such that the measure of the set $\{y: J(x,y)=0\}$ is non-zero. We claim that
$J(x,x)=0$ for every $x\in A_0$. Indeed, otherwise there is $x'$ such that $J(x,x')=0$. Then for the vector $(x_1,x_2)=(x,x')$
and every $\alpha>1$, the matrix $\alpha-J$ is
\begin{align*}
\left(
  \begin{array}{ccc}
    \alpha-J(x,x) & \alpha-J(x,x')  \\
    \alpha-J(x,x') & \alpha-J(x',x')  \\
  \end{array}
\right)=
\left(
  \begin{array}{ccc}
    \alpha-1 & \alpha  \\
    \alpha & \alpha-J(x',x')  \\
  \end{array}
\right).
\end{align*}
The entry $\alpha-J(x',x')$ is either $\alpha$ or $\alpha-1$. As a result the determinant of the matrix is at most $-\alpha^2<0$, and thus
the matrix is not positive semi-definite, and the claim is established.

If $A_0$ is zero measure set, then we can take $A_1=\Omega_h$ and the assertion is established.
Otherwise define equivalency relation on $A_0$ as follows: $x,y\in A_0$ are equivalent if $J(x,y)=0$. The reflexivity follows from the
observation above that $J(x,x)=0$ for every $x\in A_0$, and symmetry follows from symmetry of $J$. For transitivity, suppose there exists $x_1,x_2,x_3\in A_0$ such that
$J(x_1,x_2)=J(x_2,x_3)=0$, but $J(x_1,x_3)=1$. Then the matrix $(\alpha-J(x_i,x_j), 1\le i,j\le 3)$ is
\begin{align*}
\left(
  \begin{array}{ccc}
    \alpha & \alpha & \alpha-1 \\
    \alpha & \alpha & \alpha \\
    \alpha-1 & \alpha & \alpha \\
  \end{array}
\right),
\end{align*}
which is not positive semi-definite, since the determinant of this matrix is $-\alpha$. This establishes transitivity.
By the equivalency relationship, we have
$A_0=\cup_{r\ge 1} A_r$ where $A_r$ are mutually disjoint positive measure sets, and almost surely $J(x,y)=0$ if and only if $x,y\in A_r$ for some $r\ge 1$.
Thus indeed $J$ can satisfy the assumptions of Theorem~\ref{theorem:MainResult} only when it is of the form (\ref{eq:J}).

Now let us turn to the examples when $K\ge 3$. We begin with the random K-SAT model. Recall that for this model
$J$ takes values $1$ or $\exp(-\beta)$. Assumption~\ref{assumption:SoftStates} holds for this model.
We set $\alpha=1$ and claim that the assumptions of Theorem~\ref{theorem:MainResult} hold as well.  We fix an arbitrary sequence $r$ elements:
$x^l=(x_1^l,\ldots,x_n^l)\in \{0,1\}^n,~l=1,2,\ldots,r $. Fix also a  realization of $J$, and let $z_1^*,\ldots,z_K^*\in \{0,1\}$ be the corresponding
unique binary  assignment such that $J(z_1^*,\ldots,z_K^*)=\exp(-\beta)$, and $J(z_1,\ldots,z_K)=1$ when $(z_1,\ldots,z_K)\ne (z_1^*,\ldots,z_K^*)$.
Consider the corresponding array
$\bigotimes_{1\le l\le r} A_l$, where
\begin{align*}
A_l=\left(1-J(x^l_{i_1},\ldots,x^l_{i_K}), ~1\le i^l_1,\ldots,i^l_K\le n\right).
\end{align*}
Every entry of the tensor product $\bigotimes_{1\le l\le r} A_l$ is conveniently indexed by a sequence\\
$(i^1_1,\ldots,i^r_1), \ldots, (i^1_K,\ldots,i^r_K)$, where $i^l_k, 1\le l\le r, 1\le k\le K$ vary over $1,\ldots,n$.
The corresponding entry is
\begin{align*}
\prod_{1\le l\le r}\left(1-J(x^l_{i^l_1},\ldots,x^l_{i^l_K})\right).
\end{align*}
This entry is non-zero if and only if $(x^l_{i^l_1},\ldots,x^l_{i^l_K})=(z_1^*,\ldots,z_K^*)$, in which case the value is $(1-\exp(-\beta))^r$.
Let $S_0\subset \{1,\ldots,n\}$ be the set of indices such that for every $i\in S_0$, $x^1_i=x^2_i=\cdots=x^r_i$.
Recalling that $J$ is generated uniformly
at random, we see that the expected value of the entry corresponding to $(i^1_1,\ldots,i^r_1),\ldots,(i^1_K,\ldots,i^r_K)$
is $2^{-K}(1-\exp(-\beta))^r$ if for every $k$, $i^1_k,\ldots,i^r_k\in S_0$,  and is zero otherwise. Namely, $\E\bigotimes_{1\le l\le r} A_l$ is rank-1 array corresponding an associated principal sub-array.
In particular, if $\bar S$ is the set of vectors $(j_1,\ldots,j_r)$ such that $j_1,\ldots,j_r\in S_0$, then
for $y\in \R^{n^r}_+$,  the corresponding multilinear form is
\begin{align*}
\langle  y,\E\bigotimes_{1\le l\le r} A_l>=2^{-K}(1-\exp(-\beta))^r\left(\sum_{j_1,\ldots,j_r:(j_1,\ldots,j_r)\in \bar S}y_{j_1\cdots j_r}\right)^K.
\end{align*}
This form is convex since $y^K$ is a convex function on $\R_+$ for every positive integer $K$. We have verified that the K-SAT model
satisfies the assumptions of Theorem~\ref{theorem:MainResult}.

We now turn to the Viana-Bray model for the case when $K$ is even.
We set $\alpha=J_{\max}$. For convenience we will use encoding $-1,1$ for $x$ instead of $0,1$, as it was discussed
when we first introduced the example. In other words $J(x_1,x_2)=\exp(\beta Ix_1x_2)$ for any $x_1,x_2\in \{-1,1\}$.
We will use the same symmetrization trick as in~\cite{FranzLeone} and~\cite{PanchenkoTalagrand}.
For any $x_1,\ldots,x_K\in \{-1,1\}$ observe that
\begin{align*}
\alpha-J(x_1,\ldots,x_K)&=J_{\max}-\exp(\beta I \prod x_i) \\
&=J_{\max}-2^{-1}(\exp(\beta I)+\exp(-\beta I))-2^{-1}(\exp(\beta I)-\exp(-\beta I))\prod x_i \\
&\triangleq f_1(I)-f_2(I)\prod x_i.
\end{align*}
Observe also that by symmetry of the distribution of $I$, and as a result, of $f_2(I)$, for every odd $r$ we have
\begin{align}
\E f_2^r(I)=0. \label{eq:Oddr}
\end{align}
Now we verify the convexity of (\ref{eq:ExpectedTensorProduct}). Fix any sequence $x^l=(x_1,\ldots,x_n)\in \{-1,1\}^n$ for $l=1,2,\ldots,r$,
and consider the corresponding array
$\bigotimes_{1\le l\le r} A_l$, where
\begin{align*}
A_l=\alpha-J&=\left(J_{\max}-\exp(\beta I x^l_{i^l_1}\cdots x^l_{i^l_K}), ~1\le i^l_1,\ldots,i^l_K\le n\right) \\
&=\left(f_1(I)-f_2(I)x^l_{i^l_1}\cdots x^l_{i^l_K}, ~1\le i^l_1,\ldots,i^l_K\le n\right).
\end{align*}
Every entry of the tensor product $\bigotimes_{1\le l\le r} A_l$ is again conveniently indexed by a sequence\\
$(i^1_1,\ldots,i^r_1),\ldots,(i^1_K,\ldots,i^r_K)$, where $i^l_k, 1\le l\le r, 1\le k\le K$ vary over $1,\ldots,n$.
The corresponding entry is then
\begin{align*}
\prod_{1\le l\le r}\left(f_1(I)-f_2(I)x^l_{i^l_1}\cdots x^l_{i^l_K}\right)
&=\sum_{S\subset \{1,\ldots,r\}}\left(f_1^{r-|S|}(I)+(-f_2(I))^{|S|}\prod_{l\in S}x^l_{i^l_1}\cdots x^l_{i^l_K}\right)\\
&=(1+f_1(I))^r+\sum_{S\subset \{1,\ldots,r\}}(-f_2(I))^{|S|}\prod_{l\in S}x^l_{i^l_1}\cdots x^l_{i^l_K},
\end{align*}
where the product over empty set $S$ is assumed to be zero.
Then using the observation (\ref{eq:Oddr}) the expected entry is
\begin{align*}
\E[(1+f_1(I))^r]+\sum_{S\subset \{1,\ldots,r\}, |S|\text{~even~}}\E f_2^{|S|}(I)\prod_{l\in S}x^l_{i^l_1}\cdots x^l_{i^l_K}
\end{align*}
Then for any vector $y=(y_{j_1,\ldots,j_r}, 1\le j_1,\ldots,j_r\le n)\in \R^r$
\begin{align*}
\langle y,\E\bigotimes_{1\le l\le r} A_l\rangle&=\E^K[(1+f_1(I))^r]\left(\sum_{1\le j_1,\ldots,j_r\le n}y_{j_1,\ldots,j_r}\right)^K \\
&+\sum_{S\subset \{1,\ldots,r\}, |S|\text{~even~}}\E f_2^{|S|}(I)
\sum_{1\le i^l_k\le n, l\le r, k\le K}\prod_{1\le k\le K}y_{i_k^1\cdots i^r_k}\prod_{l\in S}x^l_{i^l_1}\cdots x^l_{i^l_K}
\end{align*}
The first summand is a convex function since $K$ is even. The second summand is
\begin{align*}
\sum_{S\subset \{1,\ldots,r\}, |S|\text{~even~}}\E f_2^{|S|}(I)
\left(\sum_{1\le j_1,\ldots,j_r\le n}y_{j_1\cdots j_r}\prod_{l\in S}x^l_{j_l}\right)^K.
\end{align*}
The prefactor $\E f_2^{|S|}(I)$ is non-negative since $|S|$ is restricted to be even. Finally, the remaining term is convex since $K$ is even
and the term inside the power $K$ is a linear form in vector $y$.

\section{Preliminary technical results}\label{section:Concentration}
We first obtain some basic upper and lower bound on the log-partition functions.
\begin{lemma}\label{lemma:LogParitionFinite}
Under the Assumption~\ref{assumption:SoftStates} for every graph $\G$ with $N$ nodes and $M$ edges
\begin{align}
(M+N)\log\rho_{\min}&\le  \log Z(\G)\le (M+N)\log\rho_{\max},
\label{eq:LowerUpperBoundZ1}
\end{align}
almost surely.
As a result $\E\log Z(\G))$ is well defined for every graph $\G$.
\end{lemma}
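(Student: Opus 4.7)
The plan is to bound $Z(\G)$ pointwise from both sides by handling the edge-potential factors and node-potential factors separately, after conditioning on any realization of $(h_u,J_e)$ for which Assumption~\ref{assumption:SoftStates} holds (a probability-one event).

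For the upper bound, I would use parts~1 and~2 of Assumption~\ref{assumption:SoftStates}: each $J_e\le J_{\max}\le \rho_{\max}$ almost surely, and $\int_\R h_u(x)\,dx\le \rho_{\max}$ almost surely. Bounding the $M$ edge factors by $\rho_{\max}$ and applying Fubini to the remaining product of $N$ one-dimensional integrals of $h_u$ gives
\begin{align*}
Z(\G)\;\le\;\rho_{\max}^M\int \prod_{u}h_u(\sigma(u))\,d\sigma\;=\;\rho_{\max}^M\prod_{u}\int_\R h_u(x)\,dx\;\le\;\rho_{\max}^{M+N}.
\end{align*}

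For the lower bound, I would restrict the integration domain to the product set $[0,\kappa)^N\subset\Omega_h^N$. Since $[0,\kappa)\subset\Omega_h$, part~3 of Assumption~\ref{assumption:SoftStates} (applied with $i=1$, say) implies $J_e(x^e_1,\ldots,x^e_K)\ge\rho_{\min}$ for every configuration with all coordinates in $[0,\kappa)$. Combining this with the lower bound $\int_{[0,\kappa)}h_u(x)\,dx\ge \rho_{\min}$ from part~1 yields
\begin{align*}
Z(\G)\;\ge\;\int_{[0,\kappa)^N}\prod_{u}h_u(\sigma(u))\prod_e J_e(\ldots)\,d\sigma\;\ge\;\rho_{\min}^M\prod_u\int_{[0,\kappa)}h_u(x)\,dx\;\ge\;\rho_{\min}^{M+N}.
\end{align*}
Taking logarithms of both bounds gives~(\ref{eq:LowerUpperBoundZ1}).

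Finally, the two-sided bound shows that, with probability one, $\log Z(\G)$ lies in the deterministic bounded interval $[(M+N)\log\rho_{\min},(M+N)\log\rho_{\max}]$ depending only on $M,N,\rho_{\min},\rho_{\max}$. In particular $\log Z(\G)$ is a bounded (hence integrable) random variable, so $\E\log Z(\G)$ is well defined and finite. There is no real obstacle here; the only subtle point is that one must localize the integral to the soft-core region $[0,\kappa)^N$ in order to invoke simultaneously the lower bound on $\int h$ and the strictly positive ``soft-interaction'' lower bound on $J_e$ from Assumption~\ref{assumption:SoftStates}.
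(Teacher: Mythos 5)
Your proof is correct and follows essentially the same route as the paper: bound each $J_e$ by $J_{\max}\le\rho_{\max}$ and each $\int h_u$ by $\rho_{\max}$ for the upper bound, and restrict to $[0,\kappa)^N$ to invoke part~3 of Assumption~\ref{assumption:SoftStates} together with $\int_{[0,\kappa)}h_u\ge\rho_{\min}$ for the lower bound. In fact your write-up is slightly cleaner, since the paper's displayed lower bound has a small typo (it writes $\int_\R h_u$ where $\int_{[0,\kappa)}h_u$ is meant).
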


\begin{proof}
We have
\begin{align*}
Z(\G)&\le \int_{\R^N}\prod_{1\le u\le N} h_u(x_u) \prod_{e\in E(\G)} J_e(x_{u^e_1},\ldots,x_{u^e_K}))dx\\
&\le J_{\max}^{M}\prod_{1\le u\le N}\int_\R h_u(x)dx \\
&\le \rho_{\max}^{M+N}
\end{align*}
and
\begin{align*}
Z(\G)&\ge \int_{x\in [0,\kappa)^N}\prod_u h_u(x_u) \prod_e J_e(x_{u^e_1},\ldots,x_{u^e_K})dx \\
&\ge \rho_{\min}^{M}\prod_u \int_\R h_u(x)dx \\
&\ge \rho_{\min}^{M+N}
\end{align*}
from which (\ref{eq:LowerUpperBoundZ1}) follows.
\end{proof}

We now study the impact of adding/deleting one edge from a given realization of a graph and node and edge potentials.
\begin{lemma}\label{lemma:NodeChange}
Consider any graph $\G$ and realizations of node and edge potentials. Suppose the potential in node
$v\in V(\G)$ is changed from $h_u$ to $\hat h_u$, such that Assumption~\ref{assumption:SoftStates} is still valid.
Denote the resulting instance by $\hat \G$. The following holds almost surely
\begin{align}\label{eq:NodeChange}
|\log Z(\G)-\log Z(\hat\G)|\le 2(1+|\mathcal{N}(u,\G)|)(\log\rho_{\max}-\log\rho_{\min}).
\end{align}
\end{lemma}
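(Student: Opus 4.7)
The plan is to decompose the partition function by integrating out the single coordinate $x_u$ first and then reduce the whole comparison to bounding $Z(\G)$ and $Z(\hat\G)$ against a common ``residual'' partition function that does not depend on the potential at $u$. Write
\begin{align*}
Z(\G)=\int_{\R} h_u(x_u)\,A(x_u)\,dx_u,\qquad Z(\hat\G)=\int_{\R}\hat h_u(x_u)\,A(x_u)\,dx_u,
\end{align*}
where $A(x_u)$ is the integral over the remaining coordinates $(x_v)_{v\ne u}$ of the product of the unchanged node potentials $h_v$ and all edge potentials, the latter evaluated at the fixed value $x_u$ whenever the edge is incident to $u$. The crucial point is that the function $A(\cdot)$ is identical for $\G$ and $\hat\G$, since the only modification is in the potential at node $u$.

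Next, let $Z'$ denote the partition function of the graph obtained from $\G$ by deleting node $u$ together with all $|\mathcal N(u,\G)|$ edges incident to it; note that $Z'$ is the same value for $\G$ and $\hat\G$. For the upper bound on $A(x_u)$, I would simply apply $J_e\le J_{\max}$ to each incident edge, which gives $A(x_u)\le J_{\max}^{|\mathcal N(u,\G)|}Z'$ pointwise. For the matching lower bound I would restrict the outer integral to $x_u\in[0,\kappa)$ and invoke part~3 of Assumption~\ref{assumption:SoftStates}: almost surely $h_v(x_v)=0$ outside of $\Omega_h$, so the effective domain of integration lies in $\Omega_h^{N-1}$, and on that set the soft-state property forces every incident edge potential to be at least $\rho_{\min}$, giving $A(x_u)\ge \rho_{\min}^{|\mathcal N(u,\G)|}Z'$ for $x_u\in[0,\kappa)$.

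Combining these two bounds with the estimates $\rho_{\min}\le\int_{[0,\kappa]}h_u(x)\,dx$ and $\int_{\R}h_u(x)\,dx\le\rho_{\max}$ from part~1 of the Assumption, and using $J_{\max}\le\rho_{\max}$ from part~2, I obtain the sandwich
\begin{align*}
\rho_{\min}^{|\mathcal N(u,\G)|+1}\,Z'\;\le\; Z(\G)\;\le\;\rho_{\max}^{|\mathcal N(u,\G)|+1}\,Z',
\end{align*}
and exactly the same two-sided bound holds for $Z(\hat\G)$, because $\hat h_u$ also satisfies the Assumption by hypothesis. Taking logarithms and subtracting, the common term $\log Z'$ cancels and I get
\begin{align*}
\big|\log Z(\G)-\log Z(\hat\G)\big|\;\le\;(|\mathcal N(u,\G)|+1)\bigl(\log\rho_{\max}-\log\rho_{\min}\bigr),
\end{align*}
which is actually a factor of two stronger than the claimed inequality.

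The only potential obstacle is a bookkeeping one: one must check that the almost-sure constraints from the supports of the $h_v$'s are compatible with the soft-state hypothesis so that the key lower bound on $A$ is legitimate, and that $Z'>0$ almost surely so that $\log Z'$ truly cancels. The latter follows from applying Lemma~\ref{lemma:LogParitionFinite} to the graph $\G\setminus\{u\}$, which has $N-1$ nodes and $M-|\mathcal N(u,\G)|$ edges and gives $Z'\ge \rho_{\min}^{M-|\mathcal N(u,\G)|+N-1}>0$ almost surely. Everything else is essentially algebra.
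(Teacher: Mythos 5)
Your proof is correct and follows essentially the same route as the paper: you delete node $u$ together with its incident edges to form a residual graph with partition function $Z'$, sandwich both $Z(\G)$ and $Z(\hat\G)$ between $\rho_{\min}^{1+|\mathcal{N}(u,\G)|}Z'$ and $\rho_{\max}^{1+|\mathcal{N}(u,\G)|}Z'$ using parts 1--3 of Assumption~\ref{assumption:SoftStates}, and then compare logarithms. Your final step is slightly sharper than the paper's: you observe that both $\log Z(\G)$ and $\log Z(\hat\G)$ lie in the same interval of width $(1+|\mathcal{N}(u,\G)|)(\log\rho_{\max}-\log\rho_{\min})$, whereas the paper bounds $|\log Z(\G)-\log Z(\G_0)|$ and $|\log Z(\hat\G)-\log Z(\G_0)|$ separately and invokes the triangle inequality, which is the source of the factor of two in the statement.
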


\begin{proof}
Let $\G_0$ be the graph obtained from $\G$ after deleting node $v$ and all the edges in $\mathcal{N}(v,\G)$ (together with the node potential of $u$
and edge potentials corresponding to $\mathcal{N}(u,\G)$).
Applying Assumption~\ref{assumption:SoftStates} we have
\begin{align*}
Z(\G)&=\int_{\R^N}\prod_u h_u(x_u)\prod_{e\in E(\G)} J_e(x_{u^e_1},\ldots,x_{u^e_K})dx\\
&\le  \rho_{\max}^{|\mathcal{N}(v,\G)|}\int_\R h_v(x)dx
\int_{x\in\R^{N-1}}\prod_{u\ne v} h_u(x_u)\prod_{e\in E(\G)\setminus\mathcal{N}(v,\G)}J_e(x_{u^e_1},\ldots,x_{u^e_K})dx\\
&\le \rho_{\max}^{1+|\mathcal{N}(v,\G)|}Z(\G_0),
\end{align*}
where $Z(\G_0)$ is the partition function associated with $\G_0$:
\begin{align*}
Z(\G_0)=\int_{x\in\R^{N-1}}\prod_{u\ne v} h_u(x_u)\prod_{e\in E(\G)\setminus\mathcal{N}(v,\G)}J_e(x_{u^e_1},\ldots,x_{u^e_K})dx.
\end{align*}
and where the first inequality follows since $x_v$ is not coupled anymore with $x_u, u\ne v$ through the edge potentials.
On the other hand, applying the third part of Assumption~\ref{assumption:SoftStates}
\begin{align*}
Z(\G)&\ge \int_{x_v\in [0,\kappa), x_u\in \R, u\ne v}\prod_u h_u(x_u)\prod_j J_e(x_{u^e_1},\ldots,x_{u^e_K})dx \\
&\ge \rho_{\min}^{1+|\mathcal{N}(v,\G)|}Z(\G_0).
\end{align*}
We obtain
\begin{align*}
|\log Z(\G)-\log Z(\G_0)|\le (1+|\mathcal{N}(v,\G)|)(\log\rho_{\max}-\log\rho_{\min}).
\end{align*}
Since the same bound holds for $\hat G$, we obtain the required bound.
\end{proof}

\begin{lemma}\label{lemma:EdgeChange}
Consider any graph $\G$ and realizations of node and edge potentials. Suppose an edge $e=(v_1,\ldots,v_K)$ is added to the graph
together with an edge potential $J_e$ satisfying Assumption~\ref{assumption:SoftStates}.
Denote the resulting instance by $\hat \G$.
The following holds almost surely
\begin{align}\label{eq:EdgeChange}
\left|\log Z(\G+e)-\log Z(\G)\right|\le (2K+2|\mathcal{N}(e,E)|+1)\left(\log\rho_{\max}-\log\rho_{\min}\right).
\end{align}
\end{lemma}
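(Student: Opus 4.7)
The plan is to mimic the strategy of the preceding Lemma~\ref{lemma:NodeChange} by comparing both $Z(\G+e)$ and $Z(\G)$ against a common reduced partition function $Z(\G_0)$. Concretely, I would let $\G_0$ denote the graph obtained from $\G+e$ by deleting every edge in $\mathcal{N}(e,\G+e)$; equivalently, $\G_0$ is obtained from $\G$ by deleting the $|\mathcal{N}(e,\G+e)|-1$ edges of $\mathcal{N}(e,\G+e)\setminus\{e\}$. By construction, the vertices $v_1,\ldots,v_K$ are isolated in $\G_0$, so its partition function factorizes as
\begin{align*}
Z(\G_0)=\prod_{k=1}^{K}\Bigl(\int_{\R}h_{v_k}(x)\,dx\Bigr)\cdot \tilde Z,
\end{align*}
where $\tilde Z$ is the integral over the remaining $N-K$ variables. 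Part~1 of Assumption~\ref{assumption:SoftStates} gives $\rho_{\min}^{K}\tilde Z\le Z(\G_0)\le \rho_{\max}^{K}\tilde Z$.

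Next I would obtain matching upper and lower bounds for $Z(\G+e)$ and $Z(\G)$ in terms of $\tilde Z$, using exactly the two devices from Lemma~\ref{lemma:NodeChange}. For the upper bound on $Z(\G+e)$, every edge potential $J_f$ with $f\in\mathcal{N}(e,\G+e)$ is dominated pointwise by $J_{\max}\le\rho_{\max}$, while each $\int_{\R}h_{v_k}$ is at most $\rho_{\max}$, so pulling out these factors yields $Z(\G+e)\le \rho_{\max}^{|\mathcal{N}(e,\G+e)|+K}\tilde Z$. For the lower bound, I would restrict the integration domain to $x_{v_k}\in[0,\kappa)$ for all $k=1,\ldots,K$: by part~3 of Assumption~\ref{assumption:SoftStates} each such $J_f$ is then at least $\rho_{\min}$, and by part~1 each $\int_{[0,\kappa)}h_{v_k}$ is at least $\rho_{\min}$, yielding $Z(\G+e)\ge\rho_{\min}^{|\mathcal{N}(e,\G+e)|+K}\tilde Z$. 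An identical argument applied to $\G$ — which has one fewer incident edge to remove, since $e$ is absent — produces
\begin{align*}
\rho_{\min}^{|\mathcal{N}(e,\G+e)|-1+K}\tilde Z\le Z(\G)\le \rho_{\max}^{|\mathcal{N}(e,\G+e)|-1+K}\tilde Z.
\end{align*}

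Taking logarithms, both $\log Z(\G+e)-\log\tilde Z$ and $\log Z(\G)-\log\tilde Z$ lie in intervals of widths $(|\mathcal{N}(e,\G+e)|+K)(\log\rho_{\max}-\log\rho_{\min})$ and $(|\mathcal{N}(e,\G+e)|-1+K)(\log\rho_{\max}-\log\rho_{\min})$ respectively. Since each term $\log\tilde Z$ cancels in the difference, the triangle inequality (or equivalently the observation that both logarithms lie in a common interval of combined width) gives
\begin{align*}
\bigl|\log Z(\G+e)-\log Z(\G)\bigr|\le\bigl(2|\mathcal{N}(e,\G+e)|+2K+1\bigr)\bigl(\log\rho_{\max}-\log\rho_{\min}\bigr),
\end{align*}
which is the announced estimate.

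The only mildly delicate point is the bookkeeping at the last step: one must keep track of the $K$ factors arising from isolating the endpoints $v_1,\ldots,v_K$ (accounting for the $2K$) and of the single edge $e$ that distinguishes $\G+e$ from $\G$ (accounting for the $+1$). No new analytic input is required beyond Assumption~\ref{assumption:SoftStates}; the argument is just a mildly more elaborate version of Lemma~\ref{lemma:NodeChange}, with the role played there by a single vertex now played by the whole edge $e$ together with its $K$ endpoints.
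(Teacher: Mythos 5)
Your proof follows the same strategy as the paper's: choose a reference graph $\G_0$ obtained by stripping out the edges of $\mathcal{N}(e,\cdot)$, sandwich both $Z(\G)$ and $Z(\G+e)$ multiplicatively against $Z(\G_0)$ using parts 1--3 of Assumption~\ref{assumption:SoftStates}, and combine the two bounds via the triangle inequality. The only cosmetic difference is that you keep $v_1,\ldots,v_K$ as isolated vertices in $\G_0$ and factor their contribution out into $\prod_k\int h_{v_k}$, whereas the paper deletes those nodes together with their potentials; the quantity the paper calls $Z(\G_0)$ is exactly your $\tilde Z$, so the two variants are interchangeable.

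There is, however, a small bookkeeping slip in your final displayed bound. Your two widths are $(|\mathcal{N}(e,\G+e)|+K)\Delta$ and $(|\mathcal{N}(e,\G+e)|-1+K)\Delta$, whose sum is $\bigl(2|\mathcal{N}(e,\G+e)|+2K-1\bigr)\Delta$, equivalently $\bigl(2|\mathcal{N}(e,\G)|+2K+1\bigr)\Delta$ — the paper's stated bound, with $\mathcal{N}(e,E)$ read as $\mathcal{N}(e,\G)$. You instead wrote $\bigl(2|\mathcal{N}(e,\G+e)|+2K+1\bigr)\Delta$, which double-counts the edge $e$: it already contributes both to the $+1$ you single out and to $|\mathcal{N}(e,\G+e)|$. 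Your verbal accounting in the closing paragraph ($2K$ from the endpoints, $+1$ from $e$ itself, the rest from $\mathcal{N}(e,\G)$) is correct; just the formula needs the $+1$ replaced by $-1$ when expressed through $\mathcal{N}(e,\G+e)$.

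One further caveat, which you share with the paper's own writeup: passing from $\rho_{\min}^{K+m}Z(\G_0)\le Z(\G)\le\rho_{\max}^{K+m}Z(\G_0)$ to $|\log Z(\G)-\log Z(\G_0)|\le(K+m)(\log\rho_{\max}-\log\rho_{\min})$ (and hence the ``sum of widths'' or triangle-inequality step) is only literally valid when $\log\rho_{\min}\le 0\le\log\rho_{\max}$; otherwise the two intervals are not anchored at a common point and the difference can exceed the sum of their widths. The paper makes the same unstated assumption, so this is not a defect of your argument relative to theirs, but it is worth being aware that the clean triangle-inequality reading requires $\rho_{\min}\le 1\le\rho_{\max}$.
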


\begin{proof}
Consider the graph $\G_0$ obtained from $\G$ by deleting nodes $v_1,\ldots,v_K$  and all the edges in $\mathcal{N}(e,\G)$,
together with their associated node and edge potentials.
From Assumption~\ref{assumption:SoftStates} we obtain
\begin{align*}
Z(\G)&=\int_{\R^N}\prod_u h_u(x_u)\prod_e J_e(x_{u^e_1},\ldots,x_{u^e_K})dx\\
&\le  \rho_{\max}^{K+|\mathcal{N}(e,\G)|}
\int_{x\in\R^{N-K}}\prod_{u\ne v_1,\ldots,v_K} h_u(x_u)\prod_{e\notin \mathcal{N}(e,\G)}J_e(x_{u^e_1},\ldots,x_{u^e_K})dx\\
&=\rho_{\max}^{K+|\mathcal{N}(e,\G)|}Z(\G_0).
\end{align*}
On the other hand, by the third part of Assumption~\ref{assumption:SoftStates}
\begin{align*}
Z(\G)&\ge \int_{x_{v_1},\ldots,x_{v_K}\in [0,\kappa), x_u\in \R, u\ne v_1,\ldots,v_K}\prod_u h_u(x_u)\prod_e J_e(x_{u^e_1},\ldots,x_{u^e_K})dx \\
&\ge \rho_{\min}^{K+|\mathcal{N}(e,\G)|}Z(\G_0).
\end{align*}
We conclude
\begin{align*}
|\log Z(\G)-\log Z(\G_0)|\le (K+|\mathcal{N}(e,\G)|)\left(\log\rho_{\max}-\log\rho_{\min}\right).
\end{align*}
Observe that a similar bound holds when $\G+e$ replaces $\G$, where we simply replace $|\mathcal{N}(e,\G)|$ with $|\mathcal{N}(e,\G)|+1$ in upper and lower bounds.
Putting the two bounds together, we obtain the result.
\end{proof}

We now establish a result regarding the concentration of $\log Z(\G(N,c))$ around its mean.
\begin{prop}\label{prop:Concentration}
Under Assumption~\ref{assumption:SoftStates} the following concentration bound holds:
\begin{align*}
\lim_{N\rightarrow\infty}\pr\left(\left|N^{-1}\log Z(\G(N,c))-N^{-1}\E[\log Z(\G(N,c))]\right|>\log^3 N/\sqrt{N}\right)=0.
\end{align*}
\end{prop}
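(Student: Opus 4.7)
The approach is to establish concentration of $\log Z(\G(N,c))$ around its mean via the bounded-differences method applied to the Doob martingale that exposes the independent random inputs one at a time. These inputs are the $N$ node potentials, the $\lfloor cN\rfloor$ ordered hyperedges, and the $\lfloor cN\rfloor$ edge potentials, all mutually independent, giving $N + 2\lfloor cN\rfloor$ exposure steps in total. Lemmas~\ref{lemma:NodeChange} and~\ref{lemma:EdgeChange} already supply the per-coordinate almost-sure change bounds: resampling any node potential changes $\log Z$ by at most $2(1+|\mathcal{N}(u,\G)|)(\log\rho_{\max}-\log\rho_{\min})$, and resampling either the position or the potential of an edge changes $\log Z$ by at most $O(K+|\mathcal{N}(e,\G)|)(\log\rho_{\max}-\log\rho_{\min})$. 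So the tools required for a martingale concentration argument are in hand.

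The central obstacle is that these Lipschitz constants are themselves random: they scale with local degrees, which are not a priori uniformly bounded, so McDiarmid's inequality cannot be applied out of the box. To deal with this I would introduce the high-probability event
\begin{align*}
\mathcal{E}_N \;=\; \bigl\{\Delta(\G(N,c)) \le \log^2 N\bigr\},
\end{align*}
where $\Delta$ denotes the maximum degree. Since each node's degree in $\G(N,c)$ is Binomial with mean $Kc$, a union bound combined with a standard Chernoff estimate yields $\pr(\mathcal{E}_N^c)$ super-polynomially small in $N$. On $\mathcal{E}_N$, each per-coordinate change bound from Lemmas~\ref{lemma:NodeChange}--\ref{lemma:EdgeChange} is uniformly at most $C\log^2 N$ for an absolute constant $C$ depending only on $K$ and the constants of Assumption~\ref{assumption:SoftStates}. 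I would then invoke a McDiarmid-type inequality with exceptional outcomes (Warnke's variant, for instance), which asks for the bounded-differences property only on $\mathcal{E}_N$ and pays an additive $(N+2\lfloor cN\rfloor)\pr(\mathcal{E}_N^c)$ for the bad event, to obtain
\begin{align*}
\pr\!\Bigl(\bigl|\log Z(\G(N,c)) - \E \log Z(\G(N,c))\bigr| > t\Bigr) \;\le\; 2\exp\!\left(-\frac{t^2}{C' N \log^4 N}\right) + o(1).
\end{align*}
Setting $t = \sqrt{N}\log^3 N$ makes the exponent $-\Omega(\log^2 N)$, and dividing by $N$ yields the claimed rate $\log^3 N/\sqrt{N}$.

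The remaining subtlety, which is where the continuous-spin setting makes the argument more delicate than in the discrete case, is ensuring that the contribution of $\log Z$ on $\mathcal{E}_N^c$ does not distort the unconditional mean by more than $o(\sqrt{N}\log^3 N)$. This is exactly the role of Lemma~\ref{lemma:LogParitionFinite}: it furnishes the deterministic a.s.\ envelope $|\log Z(\G(N,c))| \le (M+N)\max(|\log\rho_{\min}|,|\log\rho_{\max}|) = O(N)$, valid uniformly over all realizations despite the fact that $Z$ is defined as an integral over $\R^N$. Consequently $|\E\log Z - \E[\log Z\,\mathbf{1}_{\mathcal{E}_N}]| \le O(N)\pr(\mathcal{E}_N^c) = o(\sqrt{N}\log^3 N)$, and the bad event is harmlessly absorbed into the additive error. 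The hardest part of the argument is not any single step but the interplay between the random, unbounded local-degree dependence of the Lipschitz constants and the necessity of a deterministic envelope on $\log Z$ for continuous spins; both are handled respectively by the truncation event $\mathcal{E}_N$ and by Lemma~\ref{lemma:LogParitionFinite}.
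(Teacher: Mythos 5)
Your proposal rests on the same core idea as the paper's: the per-coordinate Lipschitz constants of $\log Z$ scale with local degrees, which are not uniformly bounded, so one truncates on a high-probability degree-cap event and then applies a bounded-differences concentration inequality. Where you differ is in the exposure scheme and the inequality used. The paper uses a \emph{vertex}-exposure martingale $R_n = \E[\log Z \mid \mathcal{F}_n]$ together with a \emph{stopping time} $M$, the first vertex at which the running maximum degree $D_n$ exceeds $\log n$; the stopped martingale $\hat R_n = R_{\min(M,n)}$ then has a deterministic increment bound $O(\log^2 N)$ (each newly revealed vertex brings at most $D_n$ new edges, each contributing $O(D_n)$ by Lemmas~\ref{lemma:NodeChange}--\ref{lemma:EdgeChange}), Azuma--Hoeffding applies directly, and since $\hat R_N = R_N$ on $\{M=N\}$ and $\E\hat R_N = \E R_N$ by optional stopping, the bad event only adds $\pr(M<N) = N^{-\Omega(\log\log N)}$ to the failure probability with no further correction. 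Your version instead exposes all $N + 2\lfloor cN\rfloor$ independent coordinates (node potentials, edge locations, edge potentials) and appeals to a typical-bounded-differences inequality in the style of Warnke. This works, but be careful with the parameters: the additive penalty there is $\sum_i \gamma_i^{-1}\pr(\mathcal{E}_N^c)$, and it equals $(N+2\lfloor cN\rfloor)\pr(\mathcal{E}_N^c)$ only if all $\gamma_i = 1$, in which case the variance proxy is driven by the \emph{worst-case} Lipschitz constants $d_i = O(N)$ rather than the on-event bounds $c_i = O(\log^2 N)$, giving $\Theta(N^3)$ and destroying the estimate. You need $\gamma_i$ of order $1/N$ to restore the variance proxy $O(N\log^4 N)$; the penalty then grows to $\Theta(N^2)\pr(\mathcal{E}_N^c)$, which is still $o(1)$ thanks to the super-polynomially small degree failure probability. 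Finally, the envelope from Lemma~\ref{lemma:LogParitionFinite} invoked in your last paragraph is a reasonable sanity check, but it is not actually needed once Warnke's inequality is applied correctly --- that inequality already bounds deviations from the unconditional mean --- and the paper's stopped-martingale route avoids the issue entirely via optional stopping. Both routes yield the same exponent $t^2/(C N\log^4 N) = \Omega(\log^2 N)$ at $t = \sqrt N\,\log^3 N$, which is what the statement requires.
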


\begin{proof}
A standard approach for proving such concentration result is Azuma-Hoeffding inequality which establishes
concentration bound for martingales with bounded increments.
The application of such a technique would be straightforward if we had a deterministic bound on the edge and node degrees $\mathcal{N}(e,\G)$.
Unfortunately, this is not the case as the largest degree in sparse random graphs $\G(N,c)$ is known to grow at nearly a logarithmic rate.
In order to deal with this we first establish a simple bound on the degree which holds with high probability and then apply the martingale concentration bound
to a truncated version of $Z(\G)$.

Thus let us establish the following simple bound on the largest degree of $\G$.
\begin{align}\label{eq:DegreeBound}
\pr\left(\max_{u\le N}|\mathcal{N}(u,\G)|=\log N\right)=N^{-O(\log\log N)}.
\end{align}
The total number of edges not containing node $u$ is $N^K-(N-1)^K$. Thus  the probability that a randomly chosen node contains $u$ is
\begin{align*}
{N^K-(N-1)^K\over N^K}=1-(1-1/N)^K={K\over N}+o(N^{-1}).
\end{align*}
Given an arbitrary $m$, we then obtain
\begin{align*}
\pr\left(|\mathcal{N}(u,\G)|=m\right)&={\cNfloor\choose m}\left({K\over N}+o(N^{-1})\right)^m\left(1-{K\over N}+o(N^{-1})\right)^m\\
&\le {\cNfloor\choose m}\left({K\over N}+o(N^{-1})\right)^m\\
&\le {(cN)^m\over m!}\left({K^m\over N^m}+o(N^{-m})\right)\\
&={(cK)^m\over m!}+o\left({(cK)^m\over m!}\right)
\end{align*}
When $m\ge \log N$, this bound is ${1\over N^{O(\log\log N)}}$. Using union bound, (\ref{eq:DegreeBound}) follows from
$N^2 N^{-O(\log\log N)}=N^{-O(\log\log N)}$, where factor $N$ in $N^2$ is obtained by over summing over nodes, and the
second factor $N$ is obtained by summing over $\log N\le m\le N$.

We now return to the proof of the concentration result.
For every $n=1,2,\ldots,N$, let $\mathcal{F}_n$ be the filtration associated with random variables $h_u, 1\le u\le n$,
all edges $e$ spanned by the nodes $1,\ldots,n$, as well as
their associated random variables  $J_{e}$. Namely,
$\mathcal{F}_n$ is the information revealed by the portion of the graph $\G(N,c)$ associated with the first $n$ nodes,
edges spanned by these nodes,
as well as their associated potentials $h_u, J_{e}$. Then $R_N=\log Z(\G(N,c))$ and
$R_n\triangleq \E[\log Z(\G(N,c))|\mathcal{F}_n], 0\le n\le N$ is a martingale,
where $\mathcal{F}_0$ is assumed to be a trivial filtration and $\E[\log Z(\G(N,c))|\mathcal{F}_0]=\E[\log Z(\G(N,c))]$.
For every $n$, let $D_n$ be the maximum  degree of the nodes $1,\ldots,n$ in the subgraph spanned by $1,\ldots,n$.
Applying Lemmas~\ref{lemma:NodeChange} and~\ref{lemma:EdgeChange} we have $|R_n-R_{n-1}|\le c_1+c_1D_n^2$, for some constant $c_1>0$,
which depends on $K,\rho_{\min},\rho_{\max}$ only. Indeed the conditioning on the node potential $h_n$ of the node $n$,
by Lemma~\ref{lemma:NodeChange} changes the conditioned expectation by at most $c_1+c_2 D_n$ for some $c_1,c_2$.
Also revealing each edge incident to $n$ and spanned by nodes $1,\ldots,n$ changes the conditioned expectation also by at most
$c_1+c_2 D_n$, by Lemma~\ref{lemma:EdgeChange}. Thus the total change is at most $c_1+c_2 D_n^2\le c_3 D_n^2$,
 for some appropriate constant $c_3$, (where we do not bother to rename the constants $c_1,c_2$).

Let $M\le N$ be defined the smallest $n$ such that $D_n>\log n$.
 If no such node exists (which by (\ref{eq:DegreeBound}) occurs with overwhelming probability), then we set $M=N$. Clearly,
$M$ is a stopping time with respect to the filtration $\mathcal{F}_n$, and $\hat R_n\triangleq R_{\min(M,n)}, 0\le n\le N$ is a stopped martingale,
satisfying $|\hat R_n-\hat R_{n-1}|\le c_3\log^2 N$.
Now applying Azuma-Hoeffding inequality, we obtain  for every $x>0$,
\begin{align*}
\pr\left(\left|\hat R_N-\E\hat R_N\right|> x( c_3\log^3 N) \sqrt{N}\right)\le \exp(-x^2/2).
\end{align*}
From this we obtain
\begin{align*}
\pr\left(\left|R_N-\E[R_N]\right|>\sqrt{N}\log^3 N\right)&\le \pr\left(\left|\hat R_N-\E[\hat R_N]\right|>\sqrt{N}\log^3 N, M=N\right)
+\pr(M<N)\\
&\le \pr\left(\left|\hat R_N-\E[\hat R_N]\right|>\sqrt{N}\log^3 N\right)+N^{-O(\log\log N)}\\
&\le \exp\left(-\log^2 N/(2c_3^2)\right)+N^{-O(\log\log N)}\\
&=N^{-O(\log\log N)}.
\end{align*}
We obtain the claimed concentration result:
\begin{align*}
\pr\left(\left|N^{-1}\log Z(\G(N,c))-N^{-1}\E[\log Z(\G(N,c))]\right|>\log^3 N/\sqrt{N}\right)=N^{-O(\log\log N)}.
\end{align*}
\end{proof}

\section{Interpolation scheme and superadditivity}\label{section:Interpolation}
In this section we introduce the interpolation method and use it to finish to prove our main result, Theorem~\ref{theorem:MainResult}.
Given a positive integer $N$,  consider any positive integers $N_1,N_2$ such that $N_1+N_2=N$.
For every  $t=0,1,\ldots,\cNfloor$ we introduce a  random graph denoted by $\G(N,c,t)$ generated as follows.
The graph $\G(N,c,t)$ has $\cNfloor$ edges. Among those,
$t$ edges are generated independently and uniformly at random among all the $N^K$ potential edges on $N$ nodes. Each of the remaining
$\cNfloor-t$ edges is  generated independently and uniformly at random among the $N_1^K$ edges of the complete graph supported by nodes $1,2,\ldots,N_1$,
with probability $N_1/N$, and is  generated independently uniformly at random among the $N_2^K$ edges of the complete graph supported by
nodes $N_1+1,,\ldots,N$, with probability $N_2/N$.
Observe that $\G(N,c,0)=\G(N,c)$ and $\G(N,c,\cNfloor)$ is a disjoint union of two graphs $\G_j, j=1,2$, where $\G_j$ has $N_j$ nodes
and $R_j$ edges chosen uniformly at random from $N_j^K$ edges, where $R_j$ has a binomial distribution with $\cNfloor$ trials and success
probability $N_j/N$. In particular, the expected number of edges in $\G_j$ is $(N_j/N)\cNfloor\in [cN_j-1,cN_j]$.
Every node $u=1,\ldots,N$ of the graph $\G(N,c,t)$ is equipped with the node potential $h_u$ distributed according to $\nu_h$,
and every edge $e=e_1,\ldots,e_{\cNfloor}$ of the graph is equipped with the edge potential $J_e$ distributed according to $\nu_J$,
all choices made independent. Our main technical result leading to Theorem~\ref{theorem:MainResult} is that the expected
log-partition function of $\G(N,c,t)$ is decreasing as a function of $t$:
\begin{prop}\label{prop:interpolation}
Suppose Assumption~\ref{assumption:SoftStates} holds. Then
\begin{align*}
\E[\log Z(\G(N,c,t))]\ge \E[\log Z(\G(N,c,t+1))]
\end{align*}
for every $0\le t\le \cNfloor-1$.
\end{prop}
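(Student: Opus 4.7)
The plan is to run a Franz--Leone style interpolation. I would couple $\G(N,c,t)$ and $\G(N,c,t+1)$ so that they share a common subgraph $\G'$ with $\lfloor cN\rfloor-1$ edges and identical node and edge potentials, and differ only in the last edge $e=(u^e_1,\ldots,u^e_K)$: in $\G(N,c,t)$ the extra edge follows the \emph{global} distribution (uniform over $\{1,\ldots,N\}^K$), while in $\G(N,c,t+1)$ it follows the \emph{split} distribution (with probability $p_j=N_j/N$, uniform over block $j$ in its $K$-th power, $j=1,2$). Since $Z(\G'+e)=Z(\G')\langle J_e\rangle_{\G'}$, where $\langle\cdot\rangle_{\G'}$ is the Gibbs expectation on $\G'$, the proposition reduces to showing, for almost every realization of $\G'$,
\[
\E_{u,J_e}\bigl[\log\langle J_e\rangle_{\G'}\bigr]_{\mathrm{global}} \;\ge\; \E_{u,J_e}\bigl[\log\langle J_e\rangle_{\G'}\bigr]_{\mathrm{split}}.
\]

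Using $\alpha\ge J_{\max}$ so that $0\le\langle\alpha-J_e\rangle_{\G'}\le\alpha$, I would expand
\[
\log\langle J_e\rangle_{\G'} \;=\; \log\alpha \;-\; \sum_{r\ge 1}\frac{\langle\alpha-J_e\rangle_{\G'}^{\,r}}{r\alpha^r}.
\]
The soft-state part of Assumption~\ref{assumption:SoftStates} forces $\langle J_e\rangle_{\G'}>0$ almost surely (so $\langle\alpha-J_e\rangle/\alpha$ lies strictly below $1$), and every summand is non-negative, making the interchange with $\E_{u,J_e}$ legitimate by Tonelli. With $r$ independent replicas $x^1,\ldots,x^r$ of the Gibbs measure on $\G'$, the replica identity $\langle\alpha-J_e\rangle^{r}=\langle\prod_{l=1}^{r}(\alpha-J_e(x^l_{u^e_1},\ldots,x^l_{u^e_K}))\rangle_{\mathrm{rep}}$ reduces the inequality, term by term in $r$ and via the minus sign in the expansion, to verifying
\[
\E_{u,J_e}\!\Bigl\langle\prod_{l=1}^{r}(\alpha - J_e)\Bigr\rangle_{\mathrm{global}} \;\le\; \E_{u,J_e}\!\Bigl\langle\prod_{l=1}^{r}(\alpha - J_e)\Bigr\rangle_{\mathrm{split}}.
\]

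To bring in the convexity hypothesis, I would fix the replicas $(x^l)_{l=1}^{r}$, which lie in $\Omega_h^N$ almost surely since $h_u$ is supported on $\Omega_h$, and set $f(y)\triangleq\langle y,\E\bigotimes_{l=1}^{r}A_l\rangle$, the multilinear form on $\R_+^{N^r}$ in Theorem~\ref{theorem:MainResult} applied with $n=N$. Introduce the block-diagonal indicators $y^{(j)}\in\{0,1\}^{N^r}$ by $y^{(j)}_{j_1\cdots j_r}=\mathbf{1}\{j_1=\cdots=j_r\in\text{block } j\}$, so that $y^{(1)}+y^{(2)}$ is the indicator of the full diagonal in $\{1,\ldots,N\}^r$. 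Unfolding the sum in $f$ and using independence of the coordinates of $u$ then yields the identifications
\[
\E_{u,J_e}\!\prod_{l}(\alpha-J_e)\bigm|_{\mathrm{global}} = \tfrac{1}{N^K}f(y^{(1)}+y^{(2)}),\qquad \E_{u,J_e}\!\prod_{l}(\alpha-J_e)\bigm|_{\mathrm{split}} = \tfrac{p_1}{N_1^K}f(y^{(1)})+\tfrac{p_2}{N_2^K}f(y^{(2)}).
\]
Writing $y^{(1)}+y^{(2)} = p_1(y^{(1)}/p_1)+p_2(y^{(2)}/p_2)$ and combining the convexity of $f$ on $\R_+^{N^r}$ with its homogeneity of degree $K$ gives
\[
\tfrac{1}{N^K}f(y^{(1)}+y^{(2)}) \;\le\; \tfrac{p_1^{1-K}}{N^K}f(y^{(1)}) + \tfrac{p_2^{1-K}}{N^K}f(y^{(2)}) = \tfrac{1}{NN_1^{K-1}}f(y^{(1)}) + \tfrac{1}{NN_2^{K-1}}f(y^{(2)}),
\]
which is precisely the desired fixed-replica inequality. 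Averaging over the replicas (the inequality is preserved because $f\ge 0$), summing over $r\ge 1$ against the weights $1/(r\alpha^r)$, and finally taking $\E_{\G'}$ completes the proof.

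I expect the main obstacle to be the combinatorial bookkeeping of the third paragraph: checking that $\E_u\E_{J_e}\prod_{l}(\alpha-J_e)(x^l_{u_1},\ldots,x^l_{u_K})$, after restricting $u$ to each of the three supports, equals $f$ evaluated exactly at $y^{(1)}$, $y^{(2)}$, or $y^{(1)}+y^{(2)}$, and that the convexity-plus-homogeneity step produces precisely the coefficients $1/(NN_j^{K-1})$ demanded by the split distribution. The remaining analytic issues — a strictly positive lower bound on $\langle J_e\rangle_{\G'}$ to justify the log-expansion (obtained from Assumption~\ref{assumption:SoftStates} along the lines of Lemma~\ref{lemma:EdgeChange}), non-negativity of every summand to invoke Tonelli, and the passage from the fixed-replica inequality to its replica-averaged counterpart — are routine.
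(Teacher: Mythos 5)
Your proof is correct and follows essentially the same route as the paper: couple to a common sub-instance $\G_0$ (your $\G'$), expand $\log Z(\G_0+e)-\log Z(\G_0)$ around $\alpha Z(\G_0)$ with all Taylor terms nonnegative, pass to $r$ replicas, and invoke the convexity of the expected tensor product on $\R_+^{N^r}$. The only (harmless) cosmetic difference is in the final step: the paper packages the $N_j^{-1}$ normalization directly into the diagonal vectors $e^{N,r,j}$, so that $e^{N,r}=\sum_j (N_j/N)e^{N,r,j}$ and the inequality is a bare Jensen step, whereas you use $\{0,1\}$-valued block-diagonal indicators $y^{(j)}$ and recover the same coefficients $p_j/N_j^K=1/(N N_j^{K-1})$ by combining convexity with degree-$K$ homogeneity of the multilinear form — two ways of doing the same bookkeeping.
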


Before we prove the proposition we use it to prove our main result.
\begin{proof}[Proof of Theorem~\ref{theorem:MainResult}]
As a corollary of Proposition~\ref{prop:interpolation} we obtain
\begin{align}\label{eq:SuperAdd1}
\E[\log Z(\G(N,c))]\ge \E[\log Z(\G_1)]+\E[\log Z(\G_2)],
\end{align}
where $\G_1$ and $\G_2$ are disjoint parts of $\G(N,c,\cNfloor)$ described above. Since the expected number of edges
of $\G_j, j=1,2$ is in the interval $[cN_j-1,cN_j]$, and the number of edges has binomial distribution with $O(N)$ trials,
then we can obtain a graph $\G(N_j,c)$ from $\G_j$ by deleting or adding at most $O(\sqrt{N})$ edges in expectation.
Applying Lemma~\ref{lemma:EdgeChange} this also implies that for $j=1,2$
\begin{align*}
\Big |\E[\log Z(\G_1)]-\E[\log Z(\G(N_j,c))]\Big |\le O(\sqrt{N}).
\end{align*}
Combining with (\ref{eq:SuperAdd1}) this implies that the sequence $\E[\log Z(\G(N,c))]$ satisfies the following near super-additivity property:
\begin{align*}
\E[\log Z(\G(N,c))]\ge \E[\log Z(\G(N_1,c))]+\E[\log Z(\G(N_2,c))]-O(\sqrt{N}).
\end{align*}
It is a classical fact, known as Fekete's Lemma, that super-additive sequences converge to a limit after the normalization. It is rather
straightforward to show that the same applies to nearly super-additive sequences, provided the correction term is $O(N^\alpha)$, with $\alpha<1$
(and $\alpha=1/2$ in our case). The complete proof can be found in~\cite{BayatiGamarnikTetali}.

We conclude that the following limit exists:
\begin{align*}
\lim_{N\rightarrow\infty}{\E[\log Z(\G(N,c))]\over N}.
\end{align*}
Combining with the concentration result of Proposition~\ref{prop:Concentration}, we conclude that the sequence $\G(N,c)$
is right-converging.
\end{proof}

\begin{proof}[Proof of Proposition~\ref{prop:interpolation}]
Fix any $t<\cNfloor$.
Observe that the graph $\G(N,c,t+1)$ can be obtained from $\G(N,c,t)$ by removing from $\G(N,c,t)$ an edge $e$ (together with the
associated edge potential) chosen uniformly at random from all the edges of $\G(N,c,t)$,
and adding an edge $\hat e$ supported by nodes $1,\ldots,N_1$ chosen uniformly at random from $N_1^K$ possibilities, with probability $N_1/N$,
or supported by nodes $N_1+1,\ldots,N$ chosen uniformly at random from $N_2^K$ possibilities, with probability $N_2/N$.
The newly created edge $\hat e$ is equipped with an edge potential $J_{\hat e}$ generated at random using $\nu_J$, independently from all the other randomness
of the graph.
In this edge removing and edge adding procedure we keep the node potentials intact. Similarly, we keep edge potentials intact for all edges other
than the removed and the added one.
Let $\G_0$ be the realization of the graph obtained after removing edge $e$, but before adding $\hat e$. We assume that $\G_0$
encodes the node/edge potentials as well.
Our proposition will follow from the following inequality which we claim holds for every $\G_0$:
\begin{align}\label{eq:basicInequality}
\E[\log Z(\G(N,c,t))|\G_0]-\log Z(\G_0)\ge \E[\log Z(\G(N,c,t+1))|\G_0]-\log Z(\G_0).
\end{align}
Note by Lemma~\ref{lemma:LogParitionFinite} that $\log Z(G_0)$ as well as both  expectations  are finite, and thus the proposition
indeed follows from (\ref{eq:basicInequality}).
Let $e=(v_1,\ldots,v_K)$ and let $J$ be the corresponding edge potential. Similarly, let $\hat e=(\hat v_1,\ldots,\hat v_K)$
and let $\hat J$ be the corresponding edge potential.
Notice that the randomness of $e,\hat e,J$ and $\hat J$ are the only sources of randomness in the expectations in (\ref{eq:basicInequality}).

Considering $\alpha\ge J_{\max}$ such that (\ref{eq:ExpectedTensorProduct})  is convex, we use Taylor expansion
\begin{align*}
\log x-\log x_0=-\sum_{r\ge 1} r^{-1}(x_0-x)^r x_0^{-r}
\end{align*}
around $x_0=\alpha Z(\G_0)$, we obtain
\begin{align*}
\E[\log &Z(\G(N,c,t))|\G_0,J]-\log Z(\G_0) \\
&=\log \alpha-\sum_{r\ge 1}r^{-1}\alpha^{-r}Z^{-r}(\G_0)\E\left[\left(\alpha Z(\G_0)-Z(\G(N,c,t))\right)^r\right].
\end{align*}
Before we proceed, we need to justify the interchange of infinite summation and expectation.
First observe that $\alpha Z(\G_0)\ge Z(\G(N,c,t))$, since adding an edge can increase the partition function by at most $J_{\max}\le \alpha$
multiplicative factor. (Bound in Lemma~\ref{lemma:EdgeChange} is cruder since we needed it to be two sided). Then the interchange of limits
is justified by the Monotone Convergence Theorem.

With a similar expression for $Z(\G(N,c,t+1))$, we obtain that it suffices to show
\begin{align}\label{eq:leftright}
\E\left[\left(\alpha Z(\G_0)-Z(\G(N,c,t))\right)^r\right]\le \E\left[\left(\alpha Z(\G_0)-Z(\G(N,c,t+1))\right)^r\right].
\end{align}
We begin with the expression on the left and expand it as
\begin{align*}
&\E\left[\left(\alpha Z(\G_0)-Z(\G(N,c,t))\right)^r\right]\\
&=\E\left(\int_{\R^N} \left(\alpha-J(x_{v_1},\ldots,x_{v_K})\right)\prod_{u}h_u(x_{u})
\prod_{e\in E(\G_0)}J_e(x_{u^e_1},\ldots,x_{u^e_K})dx\right)^r\\
&=\E\int_{x^1,\ldots,x^r\in \R^N} \prod_{1\le l\le r}\left(\alpha-J(x^l_{v_1},\ldots,x^l_{v_K})\right)\prod_{u}h_u(x^l_{u})\\
&\times \prod_{e\in E(\G_0)}J_e(x^l_{u^e_1}),\ldots,x^l_{u^e_K})dx^1\cdots dx^r\\
&={N^{-K}}\sum_{1\le v_1,\ldots v_K\le N}
\int_{x^1,\ldots,x^r\in \R^N} \E\prod_{1\le l\le r}\left(\alpha-J(x^l_{v_1},\ldots,x^l_{v_K})\right)\prod_{u}h_u(x^l_{u})\\
&\times \prod_{e\in E(\G_0)}J_e(x^l_{u^e_1},\ldots,x^l_{u^e_K})dx^1\cdots dx^r.
\end{align*}
Here we note that the expectation in the last term is with respect to the randomness of $J$ only.
Given $x^1,\ldots,x^r$, we focus on
\begin{align}\label{eq:eAe}
{N^{-K}}\sum_{1\le v_1,\ldots v_K\le N}\E\prod_{1\le l\le r}\left(\alpha-J(x^l_{v_1},\ldots,x^l_{v_K})\right).
\end{align}
For each $l=1,\ldots,r$, consider the $K$-th order $N$-dimensional array
\begin{align*}
A_l=\left(\alpha-J(x^l_{v_1},\ldots,x^l_{v_K}), 1\le v_1,\ldots,v_K\le N\right).
\end{align*}
Also consider $N^r$-dimensional vector $e^{N,r}$ defined as follows: for every $1\le i_1,\ldots,i_r\le N$,
\begin{align*}
e^{N,r}_{i_1,\ldots,i_r}=\left\{
                           \begin{array}{ll}
                             N^{-1}, & \hbox{$i_1=i_2=\cdots=i_r$;} \\
                             0, & \hbox{otherwise.}
                           \end{array}
                         \right.
\end{align*}
Now observe that (\ref{eq:eAe}) is
\begin{align*}
\E\left[\langle e^{N,r},\bigotimes_{1\le l\le r}A_l\rangle\right].
\end{align*}

Now consider the right-hand size of (\ref{eq:leftright}). For convenience, denote the set of nodes $1,\ldots,N_1$ by $[N_1]$,
and the set of nodes $N_1+1,\ldots,N$ by $[N_2]$.
Using the same expansion, but keeping in mind that we have $\hat e$ in place of $e$, we obtain
\begin{align*}
&\sum_{j=1,2}{N_j\over N}{N_j^{-K}}\sum_{1\le v_1,\ldots v_K\in [N_j]}
\int_{x^1,\ldots,x^r\in \R^N} \E\prod_{1\le l\le r}\left(\alpha-J(x^l_{v_1},\ldots,x^l_{v_K})\right)\prod_{u}h_u(x^l_{u})\\
&\times \prod_{e\in E(\G_0)}J_e(x^l_{u^e_1},\ldots,x^l_{u^e_K})dx^1\cdots dx^r.
\end{align*}
Given $x^1,\ldots,x^r$, we now focus on
\begin{align}\label{eq:eAetilde}
\sum_{j=1,2}{N_j\over N}{N_j^{-K}}\sum_{1\le v_1,\ldots v_K\in [N_j]}\E\prod_{1\le l\le r}\left(\alpha-J(x^l_{v_1},\ldots,x^l_{v_K})\right).
\end{align}
For each $j=1,2$ consider $N^r$-dimensional vector $e^{N,r,j}$ defined as follows: for every $1\le i_1,\ldots,i_r\le N$,
\begin{align*}
e^{N,r,j}_{i_1,\ldots,i_r}=\left\{
                           \begin{array}{ll}
                             N_j^{-1}, & \hbox{if $i_1=i_2=\cdots=i_r\in [N_j]$;} \\
                             0, & \hbox{otherwise.}
                           \end{array}
                         \right.
\end{align*}
Now observe that (\ref{eq:eAe}) is
\begin{align*}
\sum_{j=1,2}{N_j\over N}\E\left[\langle e^{N,r,j},\bigotimes_{1\le l\le r}A_l\rangle\right],
\end{align*}
where $A_l, 1\le l\le r$ are defined as above. By the assumption of convexity of the expected tensor product $\E[\bigotimes_{1\le l\le r}A_l]$,
which is (\ref{eq:ExpectedTensorProduct}),
we obtain
\begin{align*}
\sum_{j=1,2}{N_j\over N}\E\left[\langle e^{N,r,j},\bigotimes_{1\le l\le r}A_l\rangle\right]\ge \E\left[\langle \sum_{j=1,2}{N_j\over N}e^{N,r,j},\bigotimes_{1\le l\le r}A_l\rangle\right].
\end{align*}
Recognizing $\sum_{j=1,2}{N_j\over N}e^{N,r,j}$ as $e^{N,r}$ we obtain the claimed bound (\ref{eq:leftright}). This completes the proof
of Proposition~\ref{prop:interpolation}.
\end{proof}

\section*{Acknowledgements} The author wishes to thank L\'{a}ci Lov\'{a}sz for pointing out the property described in Lemma~\ref{lemma:RestrictedConvex}
and Robert Freund for pointing out the counterexample following this lemma. The author gratefully acknowledges the support by NSF grant CMMI-1031332.
The author wishes to thank Jennifer Chayes and Christian Borgs for many enlightening conversations regarding this work.
Finally the author wishes to thank Microsoft Research Lab at New England where part of this work was conducted.

\newcommand{\etalchar}[1]{$^{#1}$}
\providecommand{\bysame}{\leavevmode\hbox to3em{\hrulefill}\thinspace}
\providecommand{\MR}{\relax\ifhmode\unskip\space\fi MR }
\providecommand{\MRhref}[2]{%
  \href{http://www.ams.org/mathscinet-getitem?mr=#1}{#2}
}
\providecommand{\href}[2]{#2}


\end{document}